\numberwithin{equation}{section}
\newcommand{\catO}{\mathcal{O}}
\newcommand{\frakg}{\mathfrak{g}}
\newcommand{\frakh}{\mathfrak{h}}
\newcommand{\frakn}{\mathfrak{n}}
\newcommand{\frakb}{\mathfrak{b}}
\newcommand{\frakp}{\mathfrak{p}}
\newcommand{\frakq}{\mathfrak{q}}
\newcommand{\fraku}{\mathfrak{u}}
\newcommand{\calU}{\mathcal{U}}
\newcommand{\frakl}{\mathfrak{l}}
\newcommand{\fraka}{\mathfrak{a}}
\newcommand{\bbC}{\mathbb{C}}
\newcommand{\bbZ}{\mathbb{Z}}
\newcommand{\calR}{\mathcal{R}}
\newcommand{\calC}{\mathcal{C}}
\newcommand{\calD}{\mathcal{D}}
\newcommand{\calM}{\mathcal{M}}
\newcommand{\Hom}{\mathrm{Hom}}
\newcommand{\Dim}{\mathrm{dim}}
\newcommand{\Ext}{\mathrm{Ext}}
\newcommand{\End}{\mathrm{End}}
\newcommand{\Mod}{\mathrm{Mod}}
\newcommand{\Ker}{\mathrm{Ker}}
\newtheorem{theorem}{Theorem}[section]
\newtheorem{remark}[theorem]{Remark}
\newtheorem{corollary}[theorem]{Corollary}
\newtheorem{proposition}[theorem]{Proposition}
\begin{document}	
	\title[A note on parabolic Verma module homomorphisms]{A note on parabolic Verma module homomorphisms over Kac-Moody Algebras}
	\author{Xingpeng Liu }

	\address{Department of Mathematics, University of Science and Technology of China, Hefei, 230026, Anhui, P. R. China}
	\email{xpliu127@mail.ustc.edu.cn}
	\subjclass[2000]{Primary 17B10, 17B67; Secondary 17B35, 18A40}

	\keywords{Kac-Moody algebras, category $\catO$, parabolic Verma modules}

	\maketitle
\begin{abstract}
      We consider a general  parabolic category $\catO_{S}$ over symmetrizable Kac-Moody algebras. We introduce a reduction rule of hom-spaces between parabolic Verma modules over different Kac-Moody algebras which yield some applications on  parabolic Verma module homomorphisms.
	
	\end{abstract}	
	
     \section{Introduction}
     	Let $\frakg = \frakg(A)$ be a complex symmetrizable Kac-Moody algebra, where $A$ is an $n \times n$ generalized Cartan matrix. Let $I= \{1,2, \cdots, n \}$ be an index set. There exists a parabolic subalgebra of $\frakg$ associated to any subset $S$ of $I$. In this paper, we extend the parabolic category $\catO_{S}(\frakg)$ (or simply $\catO_{S}$) over $\frakg$ first studied by Rocha-Caridi and Wallach \cite{RW} to a general case where the parabolic subalgebra can be chosen for any subset $S$ of $I$ and we study certain modules of importance in the representation theory of Kac-Moody algebras, i.e., parabolic Verma modules $M_S(\lambda)$ (or generalized Verma modules) in  $\catO_{S}$.
     
        The parabolic category $\catO_{S}$ is a natural generalization of the well-known category $\catO$ introduced by Bernstein, Gelfand and Gelfand \cite{BGG} for the finite dimensional case and studied by Deodhar, Gabber and Kac for the infinite dimensional Kac-Moody case \cite{DGK}.  For the empty set $S$,  the parabolic category $\catO_{S}$ is just the category $\catO$. There are three types of modules in $\catO_{S}$ which play important roles in studying the structure of $\catO_{S}$, the simple modules $L(\lambda)$, parabolic Verma modules $M_S(\lambda)$ and their projective covers $P_S(\lambda)$. Although projective objects in parabolic categories $\catO_{S}$ may not exist in general, they do appear in certain truncated subcategories.  All modules of these three types are parametrized by the subset $P_S^+$ of $\frakh^*$ on a fixed Cartan subalgebra $\frakh$ of $\frakg$. In general,  these modules are linked by the BGG reciprocity of  parabolic version.
   
       From \cite{Soe} (or cf.  \cite{Fie1}) we have a block decomposition \[ \catO_S= \prod _{\Lambda \in P_S^+/\sim_S}\catO_{S, \Lambda}, \]
       where $\sim_S$ is the equivalence relation on $P_S^+$ defined in Subsection \ref{subsection4.1} and for each equivalence class $\Lambda \in P_S^+/\sim_S$ the block $\catO_{S, \Lambda}$ is indecomposable. Thanks to the BGG reciprocity of parabolic version, an equivalence class $\Lambda$ can be described  by  pairs $(\lambda, \mu) \in \frakh^* \times \frakh^*$ with nontrivial multiplicity $[M_S(\lambda): L(\mu)]$.  If $S$ is the empty set, then the equivalence class $\Lambda \in \frakh^*/\sim$ is determined by Verma and BGG's theorem (cf. e.g.  \cite{BGG,Verma,Hum}) for finite dimensional simple Lie algebras and by an analog of BGG's theorem \cite{KK} for general Kac-Moody algebras. 
       
       For finite dimensional simple Lie algebra, there is an extensive study on the hom-space between parabolic Verma modules.  For instance,  Lepowsky \cite{L} and Boe \cite{Boe} worked out many examples and gave precise sufficient and necessary conditions for a standard map to be zero; Zuckerman gave a useful duality theorem on hom-spaces of parabolic Verma modules ~\cite{BC,G}; Matumoto \cite{M} explored nonzero homomorphisms between two parabolic Verma modules of scalar type; Xiao \cite{WX} classified all first order leading weight vectors and determined corresponding hom-space between parabolic Verma modules. For more relevant researches on finite dimensional simple Lie algebras, one can refer to \cite{L, Irv,Hum}. However, there are few results for general Kac-Moody algebra cases. 
       Note that a basic difference between the cases of finite and infinite dimensional Kac-Moody algebras is that the property of  unique embeddings between Verma modules fails for infinite dimensional Kac-Moody algebras, namely, the dimension of hom-space
      \[ \mathrm{dim}_\bbC \mathrm{Hom}_{\catO_{S}}(M(\mu), M(\lambda)) \leq 1 \] is not true for  $\lambda, \mu \in \frakh^*$ in general \cite{KK}. Moreover, for infinite dimensional Kac-Moody algebras, their universal enveloping algebras are not (left) noetherian \cite{Berman}. 
    
   In this paper, our research concerns itself with the hom-space between parabolic Verma modules of a symmetrizable Kac-Moody algebra $\frakg$. First, we extend the parabolic category $\catO_{S}$ over $\frakg$ first studied by Rocha-Caridi and Wallach \cite{RW} to a general case in where the parabolic subalgebra can be chosen for any subset $S$ of $I$.  For any equivalence class $\Lambda \in P_S^+$, we introduce a reduction rule of hom-space between parabolic Verma modules, more precisely, we have the following theorem (i.e., Theorem \ref{Lmm}).
   \begin{theorem}~\label{main1}
   	 For any subset $S$ of $I$, any weights $\lambda, \mu$ in $P_S^+$,  let the subsets $S' \subset J$ of $I$ be defined in section ~\ref{Sec5}. Then we have a vector space isomorphism 
   	\[\mathrm{Hom}_{\catO_S(\frakg)}(M_S(\mu),M_S(\lambda)) \cong \mathrm{Hom}_{\catO_{S'}(\frakl_{J})}(V_{S'}(\mu),V_{S'}(\lambda)), \]
   	where $V_{S'}(\lambda)$ denotes the parabolic Verma module of $\frakl_{S'}$ in terms of $S'$ with the highest weight $\lambda$. 
   	\end{theorem}
   
   Moreover, we obtain that there exists the unique embedding  property for certain cases (i.e., Theorem ~\ref{cor7.3}) by using the reduction rule and the duality introduced in Subsection ~\ref{subsection42}.
   
    \begin{theorem}
   	For $\lambda, \mu \in P_S^+$, let $J:=\mathrm{supp}(\lambda - \mu) \subset I$ be defined in Section \ref{Sec5}. If the height and the $S$-height of $\lambda-\mu$ are equal, i.e.,  $\mathrm{ht}_S(\lambda -\mu)= \mathrm{ht}(\lambda - \mu)$ and $\lambda$ is of $J$-positive level (or of $J$-negative level), then 
   	  \[   
   		\mathrm{dim}_\bbC \mathrm{Hom}_{\catO_S}(M_S(\mu), M_S(\lambda)) \leq 1,
    \]
    where the $S$-height and the height of any weights are defined in Subsection \ref{subsection42}.
   \end{theorem}	
   
    In addition, for $S$ of finite type (cf. the end of Section \ref{sec2}) and an equivalence class $\Lambda$ of negative level, we introduce a finite truncation $\calC$ of $\catO_{S, \Lambda}$, which is a subcategory of $\catO_{S, \Lambda}$, then we determine a quotient category $\calD$ at the positive level, which is Ringel dual to $\calC$. The parabolic Verma modules $M_S(\lambda)$ correspond to smaller standard modules $A(\lambda')$ for some related weight $\lambda'$ under the Ringel dual. Then we have a vector space isomorphism (cf. Theorem \ref{thmredu})
   \[\Hom_{\catO_S}(M_S(\mu), M_S(\lambda)) \cong \Hom_{\calD}(A(\lambda'), A(\mu')).\]

   The paper is arranged as follows. In Section \ref{sec2}, we recall some basic notations and properties of symmetrizable Kac-Moody algebras. In Section \ref{BD}, we give a more general definition of parabolic category $\catO_S$ of $\frakg$. Then  certain modules in $\catO_{S}$, the block decomposition of $\catO_{S}$ and tilting functors will be discussed. In Section \ref{Sec5}, a reduction rule of hom-space between parabolic Verma modules and some applications will be talked about.
   
   \textsc{Conventions}. Throughout the paper, we denote the set of complex numbers, integers  by $\bbC$, $\bbZ$ respectively. Set $\bbZ_{\geq 0}$ (resp. $\bbZ_{\leq 0}$)  for the set of non-negative integers (resp. non-positive integers).  Let $I=\{1,2, \cdots, n \}$ be an index set.

	 \section{Notations and preliminaries}\label{sec2}

	First, let us recall some basic notations and properties of symmetrizable Kac-Moody algebras  based on \cite{Kac,Hum}.
	
	Fix a symmetrizable generalized Cartan matrix $A=(a_{ij})_{i, j \in I}$. Let $\frakg$ be the symmetrizable Kac-Moody algebra associated to $A$. Let $\frakh \subset \frakb$ be its Cartan subalgebra, Borel subalgebra respectively. There exists a root space decomposition of $\frakg$ with respect to $\frakh$:
	\[ \frakg = \frakh \oplus (\bigoplus_{\alpha \in \Delta} \frakg_\alpha), \]
	where $\Delta \subset \frakh^*$ is the root system of $\frakg$ relative to $\frakh$. The choice of $\frakb$ determines a set of positive roots $\Delta^+$ and a disjoint union $\Delta=\Delta^+ \cup \Delta^-$, where $\Delta^-=-\Delta^+$. Moreover, let $\Pi=\{\alpha_i, i \in I \} \subset \Delta^+$ be the set of simple roots. The corresponding set of coroots is denoted by $\Pi^\vee=\{\alpha_i^\vee, i \in I \}$, such that $ \langle \alpha_i, \alpha_j^\vee \rangle = a_{ji}$, where $ \langle \cdot, \cdot \rangle : \frakh^* \times \frakh \rightarrow \bbC$ is the canonical pairing. Set $Q= \bbZ \Pi$. Fix a non-degenerate symmetric bilinear form $(\cdot, \cdot)$ on $\frakh^*$ such that $(\alpha_i, \alpha_i)$ is a positive rational number and $\langle \lambda, \alpha_i^\vee \rangle = 2(\lambda, \alpha_i)/(\alpha_i, \alpha_i)$, for any $\lambda \in \frakh^*$, $i \in I$. Let 
	\[  \Delta^{im}=\{\alpha \in \Delta; (\alpha, \alpha)\leq 0 \}, \quad \Delta^{re}=\{\alpha \in \Delta; (\alpha, \alpha) > 0 \} \]
	  be the set of imaginary roots and the set of real roots of $\frakg$ respectively. In addition, $\frakh^*$ has a natural partial order $\leq$ defined as $\lambda \leq \mu$ if and only if $\mu -\lambda \in \bbZ_{\geq 0}\Delta^+$. 
	
	Let $S $ be any subset of  $I$. We denote $\Delta_S = \Delta \cap \Sigma_{i \in S}\bbZ\alpha_i$ and $\Delta_S^{\pm}= \Delta^\pm \cap \Delta_S$. Define the subalgebras $\frakh_S$, $\frakn_S^{\pm}$ and $\fraku_S^{\pm}$ of $\frakg$ (sometimes write $\frakn_S^+$ and $\fraku_S^+$ as $\frakn_S$ and $\fraku_S$ respectively for simplicity) as follows:
	\[ \frakh_S = \sum_{i \in S} \bbC \alpha_i^\vee, \quad
	\frakn_S^\pm = \bigoplus_{\alpha \in \Delta_S^\pm}\frakg_\alpha, \quad
	\fraku_S^\pm = \bigoplus_{\alpha \in \Delta^\pm \setminus \Delta_S^\pm}\frakg_\alpha.
	\]	
	Set 
	\[
	\frakg_S = \frakh_S \oplus \frakn_S \oplus \frakn^-_S, \quad 
	\frakl_S = \frakg_S + \frakh, \quad
	\frakp_S = \frakl_S \oplus \fraku_S.
	\]
	Obviously, those are all subalgebras of $\frakg$. We call such \textit{$\frakp_S$ the parabolic subalgebra} of $\frakg$ with respect to the subset $S$. In particular, $\frakp_S=\frakb$ if $S= \emptyset$. 
	
	Let $r_i$ be the \textit{reflection} on $\frakh^*$ determined by $\alpha_i, i \in I$, i.e., for any $\lambda \in \frakh^*$, $r_i(\lambda) = \lambda - \langle \lambda, \alpha_i^\vee \rangle \alpha_i$. Let $W$ be the \textit{Weyl group} of the Kac-Moody algebra $\frakg$ generated by $r_i, i \in I$. By the definition of real roots, we have $\Delta^{re}= W \Pi$ and $r_\alpha = wr_iw^{-1}$ is also a reflection where $\alpha = w(\alpha_i)$. 	Fix an element $\rho \in \frakh^*$ such that $\langle \rho, \alpha^\vee_i \rangle = 1$ for all $i \in I$. Obviously, such an element in $\frakh^*$ is not unique. 
	
	Note that $\frakl_S$ is essentially a symmetrizable Kac-Moody Lie algebra  determined by the generalized Cartan matrix $A_S=(a_{ij})_{i,j \in S}$. In fact, set $\frakh^S := \{h \in \frakh ; \langle\alpha_j, h \rangle =0, j \in S \}$ and $\frakh_1$ a complementary subspace of $\frakh^S + \frakh_S$ in $\frakh$. Let $\frakh_S' = \frakh_S \oplus \frakh_1$, then the subalgebra of $\frakl_S$ defined as  
	\[ \frakh_S' \oplus (\bigoplus_{\alpha \in \Delta_S} \frakg_\alpha) \]
	is isomorphic to the symmetrizable Kac-Moody algebra  $\frakg(A_S)$. As a result, the root system $\Delta_S$ is a subsystem of $\Delta$, the Weyl group $W_S$ of $\frakl_S$ is a subgroup of $W$ generated by $\{r_i, i \in S \}$.   We may endow $\frakl_S$ with a non-degenerate symmetric bilinear form in such a way that this form agrees with $(\cdot, \cdot)$ when restricted to $\frakh \times \frakh$. The set of real roots $\Delta_S^{re}=\{\alpha \in \Delta_S; (\alpha, \alpha)>0 \} $ and the set of imaginary roots $\Delta_S^{im}=\{\alpha \in \Delta_S; (\alpha, \alpha)\leq 0 \}$ of $\frakl_S$(or $ \frakg(A_S)$) do make sense in this way. Moreover,  $S$ is of \textit{finite, affine or indefinite type} if $\frakl_S$ as a Kac-Moody algebra has the corresponding type.

	 \section{Categories and modules}\label{BD}
	 
	 Fix a subset $S$ of the index set $I$. In this section, a general definition of parabolic categories over  $\frakg$ will be given and basic notions and facts on certain prominent objects in parabolic categories $\catO_S$  will be discussed. 
	 \subsection{Parabolic categories}
	Suppose that $\fraka$ is a Lie subalgebra of $\frakg$ containing $\frakh$.  For any $\fraka$-module $M$ and $\mu \in \frakh^*$, set
	$M_\mu = \{v \in M ; h.v=\mu(h)v, \forall h \in \frakh \} $ the \textit{weight space} of $M$ with respect to the weight $\mu$. If $M = \oplus M_\mu$, then we call $M$ a \textit{weight $\fraka$-module}. Set 
	 $ P(M)= \{\mu \in \frakh^* ; M_\mu \neq 0 \}$ the set of weights of $M$.
	  Let $\calU(\fraka)$ be the universal enveloping algebra of $\fraka$. Note that $\calU(\fraka)$ can be seen as  a weight module by adjoint $\fraka$-action and we write $\calU(\fraka)_{\beta}$ for the $\beta$-weight space of this module. Recall the integrable $\frakg$-module defined in \cite[Sec.3.6]{Kac}, similarly, we can consider the integrable modules over the subalgebra $\frakl_S$.
	  
	   The \textit{parabolic category} $\catO_S(\frakg)$ (or simply $\catO_S$) over $\frakg$ is the full subcategory of the weight $\frakg$-module category consisting of all weight $\frakg$-modules $M$  which are locally $\frakb$-finite and are integrable as weight $\frakl_S$-modules.
	  
	  Here we extend the parabolic category $\catO_S$ over Kac-Moody algebras which was first studied by Rocha-Caridi and Wallach \cite{RW} to a general case in which the parabolic subalgebra can be chosen for any subset $S$ of $I$. Obviously, $M \in \catO_S$ is locally $\fraku_S$-nilpotent by the above definition and is $\frakl_S$-semisimple by Proposition \ref{prop32} below.
	  
	  \begin{remark} 
	  	There are two extreme cases. If  $S$ is the empty set, then $\catO_S$ is exactly the BGG category $\catO$ over $\frakg$. If  $S$ is the whole index set, then any $\frakg$-module in $\catO_S$ is integrable, therefore, is semisimple. We call it the integrable parabolic category over $\frakg$. Denote such $\catO_S$ by $\catO_{int}$.
	  \end{remark}
	  
	  Note that any module $M \in \catO_S$ lies in the integrable parabolic category $\catO_{int}(\frakl_S)$. Hence we can easily deduce the following result by the complete reducibility theorem \cite[Theorem 10.7]{Kac}.
	  \begin{proposition} \label{prop32}
	  	Any module $M$ in the parabolic category $\catO_S$ is a completely reducible $\frakl_S$-module, i.e., $M$ as an $\frakl_S$-module is isomorphic to a direct sum of irreducible integrable highest weight $\frakl_S$-modules.
	  \end{proposition}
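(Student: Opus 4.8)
The plan is to deduce this from Kac's complete reducibility theorem \cite[Theorem 10.7]{Kac} for the symmetrizable Kac-Moody algebra $\frakg(A_S)$ underlying $\frakl_S$. First I would restrict $M$ to $\frakl_S$. Since $\frakh \subseteq \frakl_S$ the restriction is again a weight module; it is integrable over $\frakl_S$ by the definition of $\catO_S$; and since $\frakn_S \subseteq \frakn$, the local $\frakb$-finiteness of $M$ makes the $\frakn_S$-action locally nilpotent. Thus $M$, viewed over $\frakl_S$, lies in the integrable parabolic category $\catO_{int}(\frakl_S)$, as asserted just before the statement.

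The point that needs care is that the definition of $\catO_S$ used here does not a priori impose the category $\catO$ finiteness conditions (finite dimensional weight spaces, and all weights dominated by finitely many elements of $\frakh^*$) for which \cite[Theorem 10.7]{Kac} is stated. I would get around this by testing semisimplicity on cyclic submodules: it suffices to show that $\calU(\frakl_S)v$ is a direct sum of irreducible integrable highest weight $\frakl_S$-modules for every weight vector $v \in M$, since then every vector of $M$ lies in a sum of such simple submodules and $M$ becomes their direct sum. Writing $\frakl_S = \frakn_S^- \oplus \frakh \oplus \frakn_S$ and using the PBW theorem, $\calU(\frakl_S)v = \sum_j \calU(\frakn_S^-)v_j$, where $v_1, \dots, v_k$ is a weight basis of the finite dimensional space $\calU(\frakn_S)v$. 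Hence all weights of $\calU(\frakl_S)v$ lie in $\bigcup_j \{\nu \in \frakh^* : \nu \leq \mathrm{wt}(v_j)\}$, and each weight space is finite dimensional because $\frakn_S^-$ has finite dimensional root spaces and only finitely many weights at each level; integrability is inherited from $M$. So $\calU(\frakl_S)v$ belongs to $\catO_{int}$ of the Kac-Moody algebra underlying $\frakl_S$.

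Applying \cite[Theorem 10.7]{Kac} over $\frakg(A_S)$ — with the complementary summand $\frakh_1$ of $\frakh$ acting by scalars on each weight space, which does not affect semisimplicity — then shows that $\calU(\frakl_S)v$ is a direct sum of irreducible integrable highest weight modules $L_{\frakl_S}(\mu)$ with $\langle \mu, \alpha_i^\vee \rangle \in \bbZ_{\geq 0}$ for all $i \in S$, and running this over all $v$ gives the claim. The main obstacle, to the extent there is one, is exactly this reduction to cyclic submodules together with the verification that they meet the hypotheses of Kac's theorem; once that is settled the statement follows at once from \cite[Theorem 10.7]{Kac}, and if instead one builds the finiteness conditions into the definition of $\catO_S$ then $M|_{\frakl_S}$ lands in $\catO_{int}(\frakl_S)$ directly and this intermediate step can be skipped.
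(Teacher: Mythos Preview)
Your approach is correct and follows the same line as the paper: the paper simply notes that $M|_{\frakl_S}$ lies in $\catO_{int}(\frakl_S)$ and then invokes \cite[Theorem 10.7]{Kac} without further argument. Your reduction to cyclic $\frakl_S$-submodules $\calU(\frakl_S)v$ in order to secure the finiteness hypotheses (finite dimensional weight spaces, weights bounded above) needed for Kac's theorem is in fact more careful than the paper's one-line justification, and it correctly handles the passage from $\frakl_S$ to the underlying Kac--Moody algebra $\frakg(A_S)$.
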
	
	  From Proposition \ref{prop32}  all highest weight modules in $\catO_S$ have highest weights belonging to a subset $P_S^+$ of $\frakh^*$:
	  \[ P^+_S = \{\lambda \in \frakh^*; \langle\lambda, \alpha_i^\vee \rangle \in \bbZ_{\geq 0}, \forall i\in S \}.\]
	  Conversely, simple modules  in $\catO_S$ can be parametrized by this set $P_S^+$. We denote the simple module relative to $\lambda \in P_S^+$ by $L(\lambda)$, which is the unique simple quotient of any highest weight module with highest weight $\lambda$.

	Let the elements $e_i \in \frakg_{\alpha_i}, f_i \in \frakg_{-\alpha_i}, i \in I$ be the \textit{Chevalley generators} of $\frakg$. Consider the standard anti-involution $\sigma: \frakg \rightarrow \frakg$ such that $\sigma(e_i) = f_i$, $\sigma(f_i)=e_i$, for $i \in I$ and $\sigma|_\frakh = \mathrm{id}_\frakh$.  Let $D$ be the  \textit{duality functor} on $\catO$ mapping each module $M = \oplus_{\lambda \in \frakh^*}M_\lambda$ in $ \catO$ to its contragredient dual $DM = \oplus_{\lambda \in \frakh^*}(M_\lambda)^*$ in $\catO$, where the action of $\frakg$ on $(M_\lambda)^*$ is defined as $ (x\cdot \phi)(v) = \phi(\sigma(x)\cdot v) $
	for $v \in M, x \in \frakg$ and $\phi \in (M_\lambda)^*$. The duality functor $D$ is contravariant and exact. For any $M \in \catO$ with finite dimensional weight spaces, there is a natural isomorphism $M\cong D(D(M)) $. Moreover, $D$ preserves each parabolic category $\catO_S$.
	
	Let $i_S$ be the canonical inclusion $\catO_S \subset \catO$. It is a fully faithful exact functor and it admits a left adjoint $\tau_S $ which maps each object in $\catO$ to its maximal quotient in $\catO_S$. We call $\tau_S$ a \textit{parabolic truncation functor.}  The right adjoint functor of $i_S$ is  the conjugate $D\circ\tau_S\circ D $ of $\tau_S$ by $D$, which maps each object $M$ in $\catO$ to its maximal submodule in $\catO_S$.

    \subsection{Parabolic Verma modules}
   For each $\lambda \in P_S^+$, let $L_S(\lambda)$ be the integrable simple  weight $\frakl_S$-module of highest weight $\lambda$, which also gives a simple weight $\frakp_S$-module by trivial $\fraku_S$-action. A \textit{parabolic Verma module} $M_S(\lambda)$ is defined as an induced $\frakg$-module by  $\frakp_S$-module $L_S(\lambda)$ as follows:
   \[ M_S(\lambda):= \calU(\frakg) \otimes_{\calU(\frakp_{S})}L_S(\lambda). \]
   It has a unique simple quotient $L(\lambda)$. Obviously, $M_S(\lambda) \in \catO_S$. In particular, if $S=\emptyset$, then $M(\lambda) := M_S(\lambda)$ is just a \textit{Verma module}. Moreover, applying the parabolic truncation functor $\tau_S$ on $M(\lambda)$ we get the parabolic Verma module $M_S(\lambda)$.
   
   A difference between the cases of finite and infinite dimensional Kac-Moody algebras is that the unique embedding theorem fails for infinite dimensional Kac-Moody algebras, namely, the dimension of hom-space
   \[ \mathrm{dim}_\bbC \mathrm{Hom}_{\catO}(M(\mu), M(\lambda)) \leq 1 \] is not true for  $\lambda, \mu \in \frakh^*$ in general. 
    However, \cite{KK} pointed out that we still have an analogous version of BGG theorem.
   \begin{theorem} \label{thm3.3}
   	The space $\mathrm{Hom}_{\catO}(M(\mu), M(\lambda))$ is not trivial, or equivalently, the multiplicity $[M(\lambda): L(\mu)]$ is not zero iff there exist a collection $\{\lambda_0, \cdots, \lambda_t \}$ of weights in $\frakh^*$,  a collection $\{\beta_1, \cdots, \beta_t \}$ of positive roots in $\Delta^+$ and  a collection $\{n_1, \cdots, n_t \}$ of positive integers satisfying 
   	\[\lambda_0= \lambda, \quad \lambda_t=\mu, \quad \lambda_j = \lambda_{j-1}-n_j \beta_j \text{ and } 2(\lambda_{j-1}+\rho, \beta_j) = n_j(\beta_j, \beta_j) \]
   	for any $j = 1,2, \cdots, t$.
   	\end{theorem}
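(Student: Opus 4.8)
The plan is to establish the three-way equivalence by deducing it from the Shapovalov form on $M(\lambda)$ and the Kac--Kazhdan determinant formula. One implication is essentially free: since $M(\lambda)$ is free of rank one as a left $\calU(\frakn^-)$-module and $\calU(\frakn^-)$ is an integral domain by the Poincar\'e--Birkhoff--Witt theorem, every nonzero $\frakg$-homomorphism $M(\mu)\to M(\lambda)$ is injective, so $L(\mu)$ is a subquotient of $M(\lambda)$ and $[M(\lambda):L(\mu)]\neq 0$; the reverse implication will be a by-product of the ``if'' direction, which produces genuine embeddings. I would also note first that the numerical condition $2(\lambda_{j-1}+\rho,\beta_j)=n_j(\beta_j,\beta_j)$ is insensitive to the non-uniqueness of $\rho$, since two admissible choices differ by a $\nu\in\frakh^*$ with $(\nu,\alpha_i)=0$ for all $i$, hence $(\nu,\beta)=0$ for every root $\beta$. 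The main tool is the contravariant (Shapovalov) form $\langle\cdot,\cdot\rangle_\lambda$ on $M(\lambda)$, the unique symmetric bilinear form with $\langle v_\lambda,v_\lambda\rangle_\lambda=1$ and $\langle x.u,v\rangle_\lambda=\langle u,\sigma(x).v\rangle_\lambda$: different weight spaces are orthogonal, each weight space $M(\lambda)_{\lambda-\eta}$ is finite dimensional, and the radical of $\langle\cdot,\cdot\rangle_\lambda$ is the maximal proper submodule of $M(\lambda)$. The engine of the argument is the determinant of this form on $M(\lambda)_{\lambda-\eta}$ in a PBW monomial basis, which up to a nonzero scalar equals
\[
\prod_{\alpha\in\Delta^+}\prod_{m\geq 1}\big(2(\lambda+\rho,\alpha)-m(\alpha,\alpha)\big)^{\mathrm{mult}(\alpha)\,\calP(\eta-m\alpha)},
\]
$\calP$ being the partition function counting ways of writing a weight as a sum of positive roots.

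For the ``only if'' direction I would assume $[M(\lambda):L(\mu)]\neq 0$, put $\eta=\lambda-\mu\in\bbZ_{\geq 0}\Delta^+$, and induct on the height of $\eta$. From the determinant formula one extracts the Jantzen filtration $M(\lambda)=M(\lambda)^0\supseteq M(\lambda)^1\supseteq\cdots$ and its sum formula
\[
\sum_{i\geq 1}\Ch M(\lambda)^i=\sum_{\alpha\in\Delta^+}\;\sum_{\substack{m\geq 1\\ 2(\lambda+\rho,\alpha)=m(\alpha,\alpha)}}\mathrm{mult}(\alpha)\,\Ch M(\lambda-m\alpha).
\]
Since $L(\mu)$ is a proper subquotient of $M(\lambda)$ it occurs in $M(\lambda)^1$, hence in some $M(\lambda-m\alpha)$ on the right-hand side with $2(\lambda+\rho,\alpha)=m(\alpha,\alpha)$; then $\mu\leq\lambda-m\alpha$, and either $\mu=\lambda-m\alpha$, which closes the chain, or the height of $(\lambda-m\alpha)-\mu$ is strictly smaller and the inductive hypothesis supplies a chain from $\lambda-m\alpha$ to $\mu$, to which one prepends the single step $\lambda,\;\lambda-m\alpha$.

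For the ``if'' direction, a chain of length $t$ yields $M(\mu)\hookrightarrow M(\lambda)$ by composing the embeddings attached to its consecutive steps, so it is enough to handle $t=1$: given $\beta\in\Delta^+$ and $n\in\bbZ_{\geq 1}$ with $2(\lambda+\rho,\beta)=n(\beta,\beta)$, I must produce an embedding $M(\lambda-n\beta)\hookrightarrow M(\lambda)$. The determinant formula makes $\langle\cdot,\cdot\rangle_\lambda$ degenerate on $M(\lambda)_{\lambda-n\beta}$, so the maximal proper submodule of $M(\lambda)$ is nonzero in that weight and therefore, since it has finite-dimensional weight spaces bounded above, contains a primitive vector of weight $\lambda-\eta_0$ for some $\eta_0$ with $0\leq\eta_0\leq n\beta$, giving a nonzero (hence injective) map $M(\lambda-\eta_0)\to M(\lambda)$. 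It remains to force $\eta_0=n\beta$. When $\beta$ is real the hypothesis reads $\langle\lambda+\rho,\beta^\vee\rangle=n$, and Verma's classical singular-vector argument---the $\mathfrak{sl}_2$-theory of the root string through $\beta$, combined with an induction along a reduced word taking a simple root to $\beta$---pins $\eta_0=n\beta$. When $\beta$ is imaginary, which is the genuinely new case because no reflection is available, I would follow Kac and Kazhdan: deform $\lambda$ within the hyperplane $2(\lambda+\rho,\beta)=n(\beta,\beta)$, track which linear factor of the determinant is responsible for the degeneracy, and conclude $\eta_0=n\beta$ there as well.

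Assembling the pieces: if $[M(\lambda):L(\mu)]\neq 0$, the ``only if'' step yields a chain and the ``if'' step turns it into an embedding $M(\mu)\hookrightarrow M(\lambda)$, so $\Hom_\frakg(M(\mu),M(\lambda))\neq 0$; the remaining implications were recorded at the outset. The hardest part will be the Kac--Kazhdan determinant formula, and inside it the imaginary-root contribution (mirrored by the imaginary case of the ``if'' direction): these are precisely the points at which the finite-dimensional BGG argument has no counterpart, so one is forced to substitute the Shapovalov determinant and a deformation argument for Weyl-group combinatorics. In the write-up the determinant formula itself, not being among the facts recalled above, would be cited from \cite{KK} (or \cite{Kac}), and the structural argument built over it is what I would present in detail.
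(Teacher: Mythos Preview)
The paper does not supply its own proof of this theorem: it is quoted from Kac--Kazhdan \cite{KK} as a known result (``\cite{KK} pointed out that we still have an analogous version of BGG theorem''), so there is no in-paper argument to compare against. Your outline is essentially the original Kac--Kazhdan proof---Shapovalov form and determinant formula, Jantzen filtration/sum formula for the ``only if'' direction, and for the ``if'' direction the single-step embedding via degeneracy of the contravariant form together with the deformation argument in the imaginary-root case---and it is correct in its broad strokes; as you yourself note, the hard analytic content (the determinant formula and the genericity/deformation argument pinning down $\eta_0=n\beta$ for imaginary $\beta$) would still have to be imported from \cite{KK} or \cite{Kac}.
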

   
   In general, for a $\frakg$-module $N$ with finite dimensional weight spaces in $\catO_S$, we may consider the multiplicity $[N : L(\lambda)]$ of $L(\lambda)$ in $N$ for any $\lambda \in P_S^+$. Moreover, we have 
   \[\mathrm{dim}_\bbC \mathrm{Hom}_{\catO_{S}}(M_S(\lambda), N) \leq [N: L(\lambda)]. \]
    In particular, the dimension of the space of $\frakg$-module homomorphisms between parabolic Verma modules is finite.  Unfortunately, it is still difficult to determine although the dimension is finite.  

    \subsection{Projective objects}\label{projective}
     Although projective objects in parabolic categories may not exist in general, they  appear in certain truncated subcategories.  
    
    For any weight $\lambda \in \frakh^*$,  a \textit{truncated subcategory} $\catO_S^{\leqslant \lambda}$ consists of all modules $M$ in $\catO_S$ with weights $\mu\leq \lambda$, for all $\mu \in P(M)$.
    The functor $\catO_S \rightarrow \catO_S^{\leqslant \lambda}$ which maps each module $M \in \catO_S $ to the quotient $M/ \sum_{\mu \nleq \lambda}\calU(\frakg)M_\mu$ is left adjoint to the inclusion functor (see, e.g., \cite{Fie1}).
    
    Note that $\fraku_S$ is a (nil-radical) ideal of $\frakp_S$, $\calU(\fraku_S)$ is the $\frakp_S$-module by adjoint action with weight space decomposition $\calU(\fraku_S)= \oplus\gamma\calU(\fraku_S)_\gamma$. For a weight $\nu \in \frakh^*$, let $\calU(\fraku_S)^{\nleqslant \nu} $  be the $\calU(\frakp_S)$-submodule of $\calU(\fraku_S)$ generated by the space
    $ \oplus_{\gamma \nleq \nu}\calU(\fraku_S)_\gamma $. Denote by 
    $ \calU(\fraku_S)^{\leqslant \nu}$ the quotient module $\calU(\fraku_S)/\calU(\fraku_S)^{\nleqslant \nu} $. 
    
   For any $\lambda, \mu \in \frakh^*$, $\mu \leq \lambda$, set $\eta = \lambda - \mu$. If $\mu \in P^+_S$, then we consider the following  induced $\frakg$-module,
   \begin{align} \label{proj}
   Q_S^{\leqslant \lambda}(\mu): = \calU(\frakg)\otimes_{\calU(\frakp_S)}(\calU(\fraku_S)^{\leqslant \eta}\otimes_\bbC L_S(\mu)) . 
   \end{align}
   
   As in  \cite{RW}, for any $M \in \catO_{S}^{\leqslant \lambda}$, there is a natural isomorphism $\Hom_\frakg(Q_S^{\leqslant \lambda}(\mu), M) \rightarrow \Hom_{\frakl_S}(L_S(\mu), M)$. Due to Proposition \ref{prop32} the module $ Q_S^{\leqslant \lambda}(\mu) $ is a projective $\frakg$-module in the category $\catO_S^{\leqslant\lambda}$. Moreover,  $ Q_S^{\leqslant \lambda}(\mu) $ has a \textit{finite parabolic Verma flag}, i.e., a finite filtration with all its subquotients being isomorphic to parabolic Verma modules. More precisely, it has the parabolic Verma module $M_S(\mu)$ as its subquotient with multiplicity 1 and all other subquotients are isomorphic to certain parabolic Verma modules $M_S(\gamma)$ with highest weights $\gamma \geq \mu$. Let $ P_S^{\leqslant \lambda}(\mu) $ be the  indecomposable direct summand of $Q_S^{\leqslant \lambda}(\mu)$ which contains $M_S(\mu)$ as its subquotient. The module $ P_S^{\leqslant \lambda}(\mu) $ is exactly the projective cover of $L(\mu)$ in $\catO_S^{\leqslant\lambda}$ (see, for more details, e.g. \cite{RW} or \cite{Fie1}).
   
     Let $\calM_S \subset \catO_S$ be the full subcategory of modules which admit a finite parabolic Verma flag. $\calM_S$ is closed under taking summands and direct sums. So it is an exact subcategory. For arbitrary $M \in \calM_S$, denote the multiplicity of $M_S(\gamma)$ in $M$ by  $(M: M_S(\gamma))$, it is independent of the choice of finite parabolic Verma flags.
   
   We are now able to give the generalization of the BGG reciprocity formula.
   \begin{theorem} \label{thm3.4}
   	Let $\lambda, \mu \in \frakh^*$, $\mu \leq \lambda$. If $\mu \in P_S^+$, then the projective cover $ P_S^{\leqslant \lambda}(\mu) $ of the simple module $L(\mu)$ exists in $\catO_S^{\leqslant\lambda}$ and 
   	\[(P_S^{\leqslant \lambda}(\mu) : M_S(\gamma))=[M_S(\gamma): L(\mu)] \]
   	for any $M_S(\gamma)$ in  $\catO_S^{\leqslant\lambda}$.
   	\end{theorem}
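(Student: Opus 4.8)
The plan is to imitate the classical BGG reciprocity argument, using the projective generator $Q_S^{\leqslant\lambda}(\mu)$ together with the duality functor $D$ and the parabolic Verma flag already established. First I would observe that, since $P_S^{\leqslant\lambda}(\mu)$ is a direct summand of $Q_S^{\leqslant\lambda}(\mu)$ with a finite parabolic Verma flag, the multiplicity $(P_S^{\leqslant\lambda}(\mu):M_S(\gamma))$ is well defined and can be computed from the flag of $Q_S^{\leqslant\lambda}(\mu)$ once one controls how the summands of $Q_S^{\leqslant\lambda}(\mu)$ distribute the parabolic Verma subquotients. Concretely, the right strategy is the standard ``$\Ext$-orthogonality'' computation: for $M$ with a finite parabolic Verma flag in $\catO_S^{\leqslant\lambda}$ one has, for each $\gamma\in P_S^+$ with $\gamma\leq\lambda$,
\[
\dim_\bbC\Hom_\frakg\bigl(M, D M_S(\gamma)\bigr) = (M:M_S(\gamma)),
\]
because $D M_S(\gamma)$ is the dual parabolic Verma module, $\Hom_\frakg(M_S(\delta),D M_S(\gamma))$ is one-dimensional if $\delta=\gamma$ and zero otherwise, and $\Ext^1_\frakg(M_S(\delta),D M_S(\gamma))=0$ for all $\delta$ (this vanishing, together with exactness of $D$ and finiteness of weight spaces, makes $\Hom(-,DM_S(\gamma))$ additive on parabolic Verma flags). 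I would prove these two facts about $DM_S(\gamma)$ first, in two short lemmas, using the universal property $\Hom_\frakg(M_S(\delta),N)=\Hom_{\frakl_S}(L_S(\delta),N)$ and the fact that the $\frakp_S$-socle considerations reduce everything to a statement about highest weight vectors.

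Next I would apply this to $M=P_S^{\leqslant\lambda}(\mu)$. Because $P_S^{\leqslant\lambda}(\mu)$ is projective in $\catO_S^{\leqslant\lambda}$ and $DM_S(\gamma)$ lies in $\catO_S^{\leqslant\lambda}$ (its weights are $\leq\gamma\leq\lambda$), one gets
\[
(P_S^{\leqslant\lambda}(\mu):M_S(\gamma)) = \dim_\bbC\Hom_\frakg\bigl(P_S^{\leqslant\lambda}(\mu), D M_S(\gamma)\bigr) = [\,D M_S(\gamma):L(\mu)\,],
\]
where the last equality is the defining property of a projective cover: $\dim\Hom_\frakg(P(L(\mu)),N)=[N:L(\mu)]$ for $N\in\catO_S^{\leqslant\lambda}$. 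Finally I would note that $D$ preserves $\catO_S$, fixes each $L(\mu)$ (since $D L(\mu)\cong L(\mu)$, as $\sigma$ fixes $\frakh$ pointwise), and is exact, so $[D M_S(\gamma):L(\mu)]=[M_S(\gamma):L(\mu)]$. Chaining these identities gives the claimed formula $(P_S^{\leqslant\lambda}(\mu):M_S(\gamma))=[M_S(\gamma):L(\mu)]$, and the existence of the projective cover was already recorded in the paragraph preceding the theorem.

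The main obstacle, and the step I would spend the most care on, is the $\Ext^1$-vanishing $\Ext^1_\frakg(M_S(\delta),D M_S(\gamma))=0$, which is what makes $\Hom_\frakg(-,DM_S(\gamma))$ genuinely additive along a parabolic Verma flag rather than merely sub-additive; in the Kac–Moody parabolic setting one cannot quote finiteness of $\catO$ and must instead argue via the block/linkage decomposition and a central character / weight-support argument to reduce to the case $\delta$ and $\gamma$ linked, and then use the structure of $Q_S^{\leqslant\lambda}$ together with the BGG-type Theorem \ref{thm3.3} to kill the extension — alternatively, one can sidestep it by directly analyzing the parabolic Verma flag of the specific module $Q_S^{\leqslant\lambda}(\mu)$, whose subquotients $M_S(\gamma)$ with $\gamma\geq\mu$ are explicitly controlled by $\calU(\fraku_S)^{\leqslant\eta}$, and comparing $\Hom_\frakg(Q_S^{\leqslant\lambda}(\mu),DM_S(\gamma))$ computed two ways. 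A secondary technical point is keeping all modules inside the truncation $\catO_S^{\leqslant\lambda}$ so that projectivity of $P_S^{\leqslant\lambda}(\mu)$ is available; this is automatic here because every weight of $DM_S(\gamma)$ is $\leq\gamma\leq\lambda$, but it should be stated explicitly.
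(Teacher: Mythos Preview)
Your proposal is correct: it is the standard BGG reciprocity argument via the duality $D$ and the pairing $\dim_\bbC\Hom_\frakg(M, DM_S(\gamma)) = (M:M_S(\gamma))$, and the $\Ext^1$-vanishing you single out is indeed the only nontrivial ingredient. The paper's own proof simply records that $P_S^{\leqslant\lambda}(\mu)$ inherits a finite parabolic Verma flag as a summand of $Q_S^{\leqslant\lambda}(\mu)$ and then defers the reciprocity formula itself to \cite[Theorem~6.4]{RW}, so your write-up is essentially a spelled-out version of what the paper cites.
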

   \begin{proof}
   	The module $ P_S^{\leqslant \lambda}(\mu) $ is a direct summand of $Q_S^{\leqslant \lambda}(\mu)$ and $Q_S^{\leqslant \lambda}(\mu) \in \calM_S$ as above. So $ P_S^{\leqslant \lambda}(\mu) $ has a finite parabolic Verma flag, and following the proof of \cite[Theorem 6.4]{RW}, we get the formula as desired.
   	\end{proof}
    Note that all finitely generated projective modules in $\catO_S^{\leqslant\lambda}$ have finite parabolic Verma flags.

    We next consider the block decomposition of the parabolic category $\catO_S$	motivated by \cite{Soe} and \cite{Fie1}. Then we give a duality between finite truncations of non-critical blocks.
   \subsection{Block decomposition} \label{subsection4.1}
   Fix a subset $S$ of $I$. Let $\sim_S$ be the equivalence relation on $P_S^+$ generated by $\mu \sim_S \gamma$ if there exists a weight $\lambda \in \frakh^*$ such that the projective cover $P_S^{\leqslant \lambda}(\mu)$ has $M_S(\gamma)$ as a subquotient, equivalently, by Theorem \ref{thm3.4}, if the multiplicity $[M_S(\gamma): L(\mu)]$ is not zero. If $S$ is empty, we denote the equivalence relation $\sim_S$ by $\sim$.
   
   For any union of equivalence classes $\Theta $ in $ P_S^+ / \sim_S$, we consider $\catO_{S, \Theta}$ as the subcategory generated by all $P_S^{\leqslant \lambda}(\mu)$ for $\mu \in \Theta$ and $\lambda \in \frakh^*$. Obviously, $\catO_{S, \Theta} \subset \catO_{S}$ is a (full) abelian subcategory of modules $M$ such that all highest weights of subquotients of $M$ lie in $\Theta$. For any $\frakg$-module $M$ in $\catO_{S}$, define $M_\Theta$ as the maximal submodule of $M$ in $\catO_{S, \Theta}$.
   
  Hence we have the \textit{block decomposition} of $\catO_{S}$ in the following sense.
  
  \begin{proposition}
  	The functor 
  	\[\prod_{\Lambda \in P_S^+/ \sim_S} \catO_{S, \Lambda}  \longrightarrow \catO_{S}, \{M_\Lambda \}_\Lambda \longmapsto \oplus M_\Lambda \]
  	is an equivalence of categories.
  	
  	\end{proposition}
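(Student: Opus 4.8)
The plan is to show that the functor $\Phi\colon\{M_\Lambda\}_\Lambda\mapsto\bigoplus_\Lambda M_\Lambda$ (sending a family of morphisms $\{f_\Lambda\}_\Lambda$ to $\bigoplus_\Lambda f_\Lambda$) is fully faithful and essentially surjective, hence an equivalence. Throughout I use the description of $\catO_{S,\Theta}$ recorded just above the statement: it is the full subcategory of those $M\in\catO_S$ all of whose simple subquotients $L(\mu)$ have $\mu\in\Theta$, and it is closed under subquotients and arbitrary direct sums (a simple subquotient of $\bigoplus_i N_i$ is generated by a single vector of finite support, hence is a subquotient of a finite partial sum, hence of a module built from finitely many $N_i$ by extensions). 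In particular, for $M\in\catO_S$ and $\Lambda\in P_S^+/\sim_S$, the sum of all submodules of $M$ lying in $\catO_{S,\Lambda}$ again lies in $\catO_{S,\Lambda}$, so the maximal such submodule $M_\Lambda$ is well defined, and $\bigoplus_\Lambda M_\Lambda$ is again an object of $\catO_S$; thus $\Phi$ is well defined.

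The first step is \emph{Hom-orthogonality of distinct blocks}: if $M\in\catO_{S,\Lambda}$, $N\in\catO_{S,\Lambda'}$ and $\Lambda\ne\Lambda'$, the image of any homomorphism $M\to N$ is simultaneously a quotient of $M$ and a submodule of $N$, hence an object of $\catO_{S,\Lambda}\cap\catO_{S,\Lambda'}$; since $\Lambda\cap\Lambda'=\emptyset$ there are no nonzero such objects, so $\Hom_\frakg(M,N)=0$. Granting this, full faithfulness is formal: any homomorphism $M_\Lambda\to\bigoplus_{\Lambda'}N_{\Lambda'}$ meets $\bigoplus_{\Lambda'\ne\Lambda}N_{\Lambda'}$ in an object of $\catO_{S,\Lambda}\cap\catO_{S,\Theta'}$ (with $\Theta'=\bigcup_{\Lambda'\ne\Lambda}\Lambda'$), hence trivially, so it factors through $N_\Lambda$; this gives $\Hom_\frakg(\bigoplus_\Lambda M_\Lambda,\bigoplus_\Lambda N_\Lambda)=\prod_\Lambda\Hom_\frakg(M_\Lambda,N_\Lambda)$, which is the Hom-set of the product category.

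For essential surjectivity I would fix $M\in\catO_S$ and prove $M=\bigoplus_\Lambda M_\Lambda$. Directness of $\sum_\Lambda M_\Lambda$ uses the same orthogonality: $M_\Lambda\cap\sum_{\Lambda'\ne\Lambda}M_{\Lambda'}$ lies in $\catO_{S,\Lambda}\cap\catO_{S,\Theta'}=0$. For $M=\sum_\Lambda M_\Lambda$ I would reduce to finitely generated modules: $M$ is the sum of its cyclic submodules, and since $M_\Lambda$ is maximal among submodules of $M$ in $\catO_{S,\Lambda}$, it suffices to prove $M'=\bigoplus_\Lambda M'_\Lambda$ (with only finitely many nonzero summands) for finitely generated $M'\in\catO_S$. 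Such an $M'$ has a finite filtration with highest weight subquotients \cite{Hum}; a highest weight module in $\catO_S$ of highest weight $\gamma$ is, by Proposition~\ref{prop32}, a quotient of $M_S(\gamma)$, so its simple subquotients $L(\nu)$ satisfy $[M_S(\gamma):L(\nu)]\ne0$, i.e.\ $\nu\sim_S\gamma$ (Theorem~\ref{thm3.4}); hence each filtration subquotient lies in one block and $M'\in\catO_{S,\Theta}$ for $\Theta=\Lambda_1\cup\cdots\cup\Lambda_r$ a finite union of classes. A routine d\'evissage along the filtration — peeling off one block at a time — then reduces $M'=\bigoplus_iM'_{\Lambda_i}$ to the vanishing $\Ext^1_{\catO_S}(X,Y)=0$ for finitely generated $X\in\catO_{S,\Lambda_i}$, $Y\in\catO_{S,\Lambda_j}$, $i\ne j$.

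I expect this Ext-orthogonality to be the main obstacle. Here is how I would get it. A further d\'evissage along finite highest weight filtrations of $X,Y$ reduces it to $\Ext^1_{\catO_S}(H,G)=0$ for highest weight modules $H,G$ with highest weights $\gamma\in\Lambda_i$, $\gamma'\in\Lambda_j$. Writing $H=M_S(\gamma)/K$ and using the long exact sequence together with $\Hom_\frakg(K,G)=0$ (Hom-orthogonality, since $K\subseteq M_S(\gamma)\in\catO_{S,\Lambda_i}$), I reduce to $\Ext^1_{\catO_S}(M_S(\gamma),G)=0$. Every extension of $M_S(\gamma)$ by $G$ is finitely generated, so has weights bounded above by some $\lambda$ depending only on $\gamma,\gamma'$ (if $\gamma-\gamma'$ lies outside $\bbZ\Delta$ the extension even splits for weight reasons), whence $\Ext^1_{\catO_S}(M_S(\gamma),G)=\Ext^1_{\catO_S^{\leqslant\lambda}}(M_S(\gamma),G)$. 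In $\catO_S^{\leqslant\lambda}$ the projective cover $P:=P_S^{\leqslant\lambda}(\gamma)$ exists, surjects onto $M_S(\gamma)$, and carries a parabolic Verma flag whose subquotients $M_S(\delta)$ all satisfy $\delta\sim_S\gamma$ (Theorem~\ref{thm3.4}); hence $P\in\catO_{S,\Lambda_i}$, and so does $\Ker(P\twoheadrightarrow M_S(\gamma))$. Since $\Ext^1_{\catO_S^{\leqslant\lambda}}(P,-)=0$, the group $\Ext^1_{\catO_S^{\leqslant\lambda}}(M_S(\gamma),G)$ is a quotient of $\Hom_\frakg(\Ker(P\twoheadrightarrow M_S(\gamma)),G)$, which vanishes by Hom-orthogonality between $\Lambda_i$ and $\Lambda_j$. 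That would complete the Ext-orthogonality, hence the finitely generated case, hence the identification $M\cong\bigoplus_\Lambda M_\Lambda=\Phi(\{M_\Lambda\}_\Lambda)$; together with full faithfulness, $\Phi$ is an equivalence.
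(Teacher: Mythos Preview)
Your argument is correct and self-contained. The paper's own proof is a one-line citation to Fiebig \cite{Fie1}: it declares the inverse functor $N\mapsto\{N_\Lambda\}$, observing that $N_\Lambda$ coincides with the submodule generated by the images of all maps $P_S^{\leqslant\lambda}(\mu)\to N$ with $\mu\in\Lambda$. The unspoken point behind this is exactly the ingredient you isolate in your Ext step: by BGG reciprocity (Theorem~\ref{thm3.4}) each projective $P_S^{\leqslant\lambda}(\mu)$ has all its parabolic Verma subquotients, hence all its simple subquotients, in a single class, so it lies in one $\catO_{S,\Lambda}$; since every $\frakl_S$-highest weight vector of $N$ is hit by some such projective, $N=\sum_\Lambda N_\Lambda$ follows immediately. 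Your route reaches the same destination through a d\'evissage to Ext-orthogonality of blocks, which you then prove using the very same projectives. So the key idea is identical; the paper's (Fiebig's) packaging is shorter because it bypasses the reduction to finitely generated modules and the Ext computation, while your version has the merit of making every step explicit rather than hiding them in a reference. One minor remark: the finite highest-weight filtration you invoke for finitely generated objects of $\catO_S$ is indeed available in the Kac--Moody setting by the standard local-$\frakb$-finiteness argument, but \cite{Hum} is a semisimple reference; citing \cite{DGK} or \cite{RW} here would be cleaner.
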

	\begin{proof}
		By \cite{Fie1}, we can define the inverse functor $N \mapsto \{N_\Lambda \}$, where the submodule $N_\Lambda$ defined as above is actually generated by the images of all morphisms $P_S^{\leqslant \lambda}(\mu) \rightarrow N$ for any $\mu \in \Lambda$ and $\lambda \in \frakh^*$.
	\end{proof}	
	Each subcategory $\catO_{S, \Lambda}$ for $\Lambda \in P_S^+/ \sim_S$ is indecomposable, and we call it a \textit{block} of $\catO_S$ associated to $\Lambda$. 
   
	The equivalence relation $\sim$ on $\frakh^*$, by Theorem \ref{thm3.3}, is generated by the pairs $(\lambda, \mu) \in \frakh^* \times \frakh^*$ that satisfy $2(\lambda + \rho, \alpha)= n(\alpha, \alpha)$ and $\lambda - \mu =n\alpha$ for $n \in \bbZ_{\geq 0}, \alpha \in \Delta$.
	
	Following \cite{KT1}, for arbitrary $\lambda \in \frakh^*$, we define 
	\[\Delta(\lambda)=\{\alpha \in \Delta^{re}; \langle \lambda, \alpha^\vee \rangle \in \bbZ \}, \]
	with its associated subgroup $W(\lambda)$ of $W$ generated by all reflections $r_\alpha$ for $\alpha \in \Delta(\lambda)$.
	
	 For $\lambda \in \frakh^*$ and $w \in W$, we define a shifted action of $W$ by $w\cdot \lambda= w(\lambda + \rho)-\rho$, which is independent on the choice of $\rho$. Next we consider the equivalence classes $\Lambda$ in $\frakh^*/\sim$ which can be determined by the Weyl group $W$.
	 
	 Let $\Lambda \in \frakh^* / \sim$, define
	 \[\Delta(\Lambda)= \{\alpha \in \Delta ; 2(\lambda + \rho, \alpha) \in \bbZ(\alpha, \alpha) \text{ for some } \lambda \in \Lambda \}. \]
	
	We call an equivalence class $\Lambda \in \frakh^*/\sim$ \textit{critical} if the set $\Delta(\Lambda) \cap \Delta^{im} \neq \emptyset$. Otherwise, $\Lambda$ is called \textit{non-critical}.
	
	For any non-critical equivalence class $\Lambda \in \frakh^*/\sim$, we call the subgroup $W(\Lambda)$ of $W$ generated by the reflections $r_\alpha$ for all $\alpha \in \Delta(\Lambda)$ the \textit{integral Weyl group} with respect to $\Lambda$.
	
	A weight $\lambda \in \frakh^*$ is called \textit{dominant} if $\langle \lambda + \rho, \alpha^\vee \rangle \notin \bbZ_{<0} $ for all positive real roots $\alpha$ in $\Delta^{re}$. Similarly, $\lambda$ is called \textit{anti-dominant} if $\langle \lambda + \rho, \alpha^\vee \rangle \notin \bbZ_{>0} $ for all positive real roots $\alpha$ in $\Delta^{re}$. Denote the set of dominant (resp. antidominant) weights by $\frakh^{*+}$ (resp. $\frakh^{*-}$).

	\begin{proposition}  \label{prop42}
		Let $\Lambda \in \frakh^*/\sim$ and $\mu \in  \frakh^*$.
		\begin{enumerate}
			\item $|(W(\mu)\cdot\mu) \cap \frakh^{*\pm}| \leq 1$.
			
			\item If $\Lambda$ is non-critical, then $\Lambda = W(\lambda)\cdot \lambda$ for any $\lambda \in \Lambda$.
			\end{enumerate}
		
	\end{proposition}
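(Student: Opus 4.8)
The two parts are logically independent, and both reduce to standard structural facts about the integral root system $\Delta(\mu)$ and the integral Weyl group $W(\mu)$, which I would quote from \cite{KT1} (see also \cite{Fie1}) rather than reprove: $\Delta(\mu)$ is a root subsystem of $\Delta^{re}$ that is stable under every reflection $r_\beta$ with $\beta\in\Delta(\mu)$, so $W(\mu)$ is a Coxeter group generated by the reflections attached to a set $\Pi(\mu)$ of simple roots contained in $\Delta(\mu)^+:=\Delta(\mu)\cap\Delta^+$; one has $\langle\rho,\alpha^\vee\rangle\in\bbZ$ for every real root $\alpha$ (because $\rho-w\rho$ lies in the root lattice for all $w\in W$); and, as a consequence, $\Delta(w\cdot\mu)=w\,\Delta(\mu)=\Delta(\mu)$ and $W(w\cdot\mu)=W(\mu)$ for all $w\in W(\mu)$, where $w\cdot\mu=w(\mu+\rho)-\rho$.

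For part (1) I would first observe that $(W(\mu)\cdot\mu)+\rho=W(\mu)(\mu+\rho)$, so it suffices to bound the number of dominant (and, symmetrically, antidominant) weights in the orbit $W(\mu)\nu$ with $\nu:=\mu+\rho$. For any $\mu'$ in the dot orbit one has $\Delta(\mu')=\Delta(\mu)$, so for a positive real root $\alpha\notin\Delta(\mu)$ the pairing $\langle\mu'+\rho,\alpha^\vee\rangle$ is non-integral, hence never a negative integer; therefore $\mu'$ is dominant in the sense of the text precisely when $\langle\mu'+\rho,\alpha^\vee\rangle\ge 0$ for all $\alpha\in\Pi(\mu)$. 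Thus part (1) becomes the classical uniqueness of the dominant representative in a Weyl orbit, which I would prove by induction on the length $\ell$ in $W(\mu)$: if $\nu$ and $w\nu$ are both dominant with $w\neq e$, choose $\alpha\in\Pi(\mu)$ with $\ell(r_\alpha w)<\ell(w)$, i.e.\ $w^{-1}\alpha\in\Delta(\mu)^-$; then dominance of $\nu$ forces $\langle\nu,w^{-1}\alpha^\vee\rangle\le 0$ and dominance of $w\nu$ forces $\langle\nu,w^{-1}\alpha^\vee\rangle=\langle w\nu,\alpha^\vee\rangle\ge 0$, so this pairing vanishes, whence $r_\alpha(w\nu)=w\nu$, so $(r_\alpha w)\nu$ is still dominant and the inductive hypothesis yields $\nu=w\nu$. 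The antidominant case is verbatim with $\Delta(\mu)^+$ replaced by $\Delta(\mu)^-$.

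For part (2), recall that $\sim$ is generated by the pairs $(\lambda,\mu)$ with $\lambda-\mu=n\alpha$, $2(\lambda+\rho,\alpha)=n(\alpha,\alpha)$, $n\in\bbZ_{\ge 0}$, $\alpha\in\Delta$. I would first check that when $\Lambda$ is non-critical, every such pair with $n\ge 1$ and $\lambda\in\Lambda$ has $\alpha\in\Delta^{re}$: otherwise $\alpha\in\Delta^{im}$ and $2(\lambda+\rho,\alpha)=n(\alpha,\alpha)\in\bbZ(\alpha,\alpha)$ would put $\alpha$ into $\Delta(\Lambda)\cap\Delta^{im}$, contrary to non-criticality. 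For a real-root pair, $n=\langle\lambda+\rho,\alpha^\vee\rangle\in\bbZ$, so $\alpha\in\Delta(\lambda)$ and $\mu=\lambda-n\alpha=r_\alpha\cdot\lambda$ with $r_\alpha\in W(\lambda)$, while $\Delta(\mu)=\Delta(\lambda)$ and $W(\mu)=W(\lambda)$ by the invariance recalled above. Fixing $\lambda\in\Lambda$ and walking along a $\sim$-chain from $\lambda$ to an arbitrary $\mu\in\Lambda$, each step applies some $r_{\alpha_i}$ with $\alpha_i$ in the integral root system of the current weight, which stays equal to $\Delta(\lambda)$ throughout; hence $\mu\in W(\lambda)\cdot\lambda$, i.e.\ $\Lambda\subseteq W(\lambda)\cdot\lambda$. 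For the reverse inclusion, given $\alpha\in\Delta(\lambda)$ one has $r_\alpha\cdot\lambda=\lambda-m\alpha$ with $m=\langle\lambda+\rho,\alpha^\vee\rangle\in\bbZ$, and replacing $\alpha$ by $-\alpha$ when $m<0$ exhibits $(\lambda,r_\alpha\cdot\lambda)$ as one of the generating pairs, so $r_\alpha\cdot\lambda\in\Lambda$; iterating over the generators of $W(\lambda)$ (again using the constancy of $\Delta(\cdot)$) gives $W(\lambda)\cdot\lambda\subseteq\Lambda$. Hence $\Lambda=W(\lambda)\cdot\lambda$, and this holds for any $\lambda\in\Lambda$ since every such $\lambda$ has $W(\lambda)=W(\lambda_0)$ for a fixed base point $\lambda_0$.

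The one genuinely non-formal point is the invariance $\Delta(w\cdot\mu)=\Delta(\mu)$, $W(w\cdot\mu)=W(\mu)$ for $w\in W(\mu)$ — equivalently, that $\Delta(\mu)$ is a bona fide root subsystem closed under its own reflections — which rests on the integrality of $\langle\rho,\alpha^\vee\rangle$ and of the Cartan pairings $\langle\beta,\gamma^\vee\rangle$ between real roots; with that in hand, the remainder of both arguments is bookkeeping with the dot action and the Coxeter length function.
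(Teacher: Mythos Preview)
Your argument is correct and follows essentially the same route as the paper's proof, only with the details unpacked: for part~(1) the paper simply cites \cite[Proposition~3.12]{Kac} (applied to the Coxeter system $(W(\mu),\Pi(\mu))$), which is precisely the length-induction uniqueness argument you wrote out; for part~(2) the paper phrases the key step as $\Delta(\Lambda)=\Delta(\lambda)$, hence $W(\Lambda)=W(\lambda)$, which is exactly your invariance $\Delta(w\cdot\lambda)=\Delta(\lambda)$ for $w\in W(\lambda)$ combined with the observation that non-criticality forces every generating pair of $\sim$ to come from a real root. Your version is more self-contained but not a different approach.
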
	
	\begin{proof}
	 The first part follows from \cite[Proposition 3.12]{Kac}. If $\Lambda$ is non-critical, it is clear that $\Lambda = W(\Lambda)\cdot \lambda$. Since $\Delta(\Lambda)= \Delta(\lambda)$, we have $W(\Lambda)= W(\lambda)$ for any $\lambda \in \Lambda$. 
	\end{proof}

  We say a non-critical equivalence class $\Lambda \in \frakh^*/\sim$ is of \textit{positive} (resp. \textit{negative}) \textit{level} if it contains a (unique) dominant (resp. anti-dominant) weight. 
   
   A dominant weight in $\Lambda$ is maximal in the poset $(\Lambda, \leq)$. So there are enough projective objects in the block $\catO_\Lambda$ of positive level. 
   
   \subsection{Tilting functors} \label{subsection42}
   Let us recall the tilting functors introduced by Soergel in \cite{Soe}, which builds a connection between blocks of positive and negative levels.
    
    For any root $\alpha \in \Delta$, suppose that $\alpha = \sum_{i \in I} k_i \alpha_i $ for some integers $k_i$. We call the integer
   $\textrm{ht}_S \alpha := \sum_{i \in I \backslash S} k_i $ the \textit{S-height} of $\alpha$. 
   
   Each parabolic subalgebra $\frakp_S$ gives a $\bbZ$-grading  $  \frakg = \oplus \frakg_i $ of $\frakg$ where
   \[ \frakg_0=\frakl_S, \quad \frakg_i=\bigoplus_{\mathrm{ht}_S(\alpha)=i } \frakg_\alpha, \quad i \in \bbZ \text{ and } i \neq 0.\]
    In particular, if $S=\emptyset$, we say the $\bbZ$-grading of $\frakg$ defined as above is a \textit{principal $\bbZ$-grading} and the $S$-height $\textrm{ht}_S \alpha$ of $\alpha$ is just the height $\textrm{ht} \alpha$ of $\alpha$. Note that $\frakg_0, \frakg_1$ and $\frakg_{-1}$ generate the whole Lie algebra $\frakg$. If $S$ is of finite type, then all homogeneous spaces $\frakg_i$ are finite dimensional for all $ i \in \bbZ$.

    Let $S$ be of finite type. We now construct a tilting functor on the subcategory $\calM_S$ of $\catO_S$.
    
     The grading of $\frakg$ given by the parabolic subalgebra $\frakp_S$ implies that $\frakg_0= \frakl_S$. Note that $\frakl_S$ is a (finite dimensional) reductive Lie algebra. Let $\rho_S$ be the half sum of all elements in $\Delta^+_S$.  Then $\gamma_S := 2\rho-2\rho_S$ is a \textit{semi-infinite character} for $\frakg$ defined in \cite{Soe} (or cf. \cite[Chapter 7]{IK}).  
     There is a `semi-infinite' analogue of the universal enveloping algebra $\calU(\frakg)$, which is the semi-regular $\calU(\frakg)$-bimodule $S_{\gamma_S}(\frakg)$ associated to a semi-infinite character $\gamma_S$.
    For any  $\frakg$-module $M$ in $\catO_{S}$, we consider its antipode dual $M^\circledast= \oplus_{\lambda \in \frakh^*}(M_\lambda)^*$ with the antipode action $x.f(v) = -f(x.v), x \in \frakg, v \in M, f \in M^\circledast$. Then the tilting functor 
  \begin{equation}
  	\begin{array}{cccl}
  		t_S: & \calM_S & \longrightarrow & \calM_S^{opp} \\
  		& M  & \longmapsto & (S_{\gamma_S}(\frakg) \otimes_{\calU(\frakg)} M)^\circledast
  	\end{array}  	
  \end{equation}
   is an equivalence of exact categories (cf. \cite[Cor. 2.3]{Soe}, \cite[Chapter 7]{IK}).
   
    For any $\lambda \in \frakh^*$, $\mu \in P_S^+$, $\lambda \leq \mu$, there exists a unique indecomposable object $T_S^{\geqslant \lambda}(\mu)$ in $\catO_S$ \cite[Proposition 5.6]{Soe} satisfying 
   \begin{enumerate}
   	\item $\Ext^1_{\catO_{S}}(M_S(\nu), T_S^{\geqslant \lambda}(\mu))=0$ for all $\nu \geq \lambda$.
   	\item There is an inclusion $M_S(\mu) \hookrightarrow T_S^{\geqslant \lambda}(\mu)$ and its cokernel has a finite parabolic Verma flag with the only subquotients $M_S(\gamma)$ of highest weights $\lambda \leq \gamma < \mu$. 
   \end{enumerate}
   For any equivalence class $\Lambda \in \frakh^*/\sim$ of negative level and $\mu \in \Lambda_S :=\Lambda \cap P_S^+$, there always exists an indecomposable object $T_S(\mu) \in \catO_{S}$ which has a finite parabolic Verma flag with the only subquotients $M_S(\gamma)$ of highest weights $\gamma \leq \mu$ and  $\Ext^1_{\catO_{S}}(M_S(\nu), T_S(\mu))=0$ for all $\nu \in P_S^+$. Equivalently, both $T_S(\mu)$ and its dual $D(T_S(\mu))$ have a finite parabolic Verma flag, i.e., $T_S(\mu)$ is an indecomposable  \textit{tilting module}. 
  
   Let $w_S$ be the longest element in the finite Weyl group $W_S$.  For any weights $\lambda \in \frakh^*$, $\mu \in P_S^+$, $\lambda \geq \mu$, set $\lambda' = -\gamma_S-w_S\lambda$ and $\mu' = -\gamma_S-w_S\mu$, then $\lambda' \leq \mu'$. The tilting functor $t_S$ maps the parabolic Verma module $M_S(\mu)$ to the parabolic Verma module $M_S(\mu')$ and the module $P_S^{\leqslant \lambda}(\mu) $ to the indecomposable tilting module $ T_S^{\geqslant \lambda'}(\mu') $ (cf. \cite{Soe}). Actually, this is a version of Ringel equivalence for the parabolic category $\catO_{S}$.
   
   	We denote  the subcategory $\catO_{S, \Lambda_S}$ defined in the Subsection \ref{subsection4.1} by $\catO_{S, \Lambda}$ for convenience. Note that all subquotients of modules in $\catO_{S, \Lambda}$ have highest weights belonging to the set $\Lambda_S$ and $\catO_{S, \Lambda}$ is not a block of $\catO_{S}$ in general. Set $\calM_{S, \Lambda}= \catO_{S, \Lambda} \cap \calM_S$ (drop $S$ off if it is empty). 
   
   \begin{proposition}\label{Prop43}
   	Fix a subset $S$ of finite type. Let a non-critical equivalence class $\Lambda \in \frakh^*/ \sim$ be of positive level. Then $\Lambda' := -2\rho-\Lambda \in  \frakh^*/ \sim$ is a non-critical equivalence class of negative level. Restricted to $\calM_{S, \Lambda}$, the tilting functor 
   	\[ 	t_S:  \calM_{S, \Lambda}  \longrightarrow  \calM_{S, \Lambda'}^{opp} \]
   	is an equivalence of exact categories. $t_S$ maps a parabolic Verma module  $M_S(\mu)$ to $M_S(\mu')$ and the corresponding projective cover $P_S(\mu)$ to the indecomposable tilting module $T_S(\mu')$, where $\mu'=w_S\cdot(-2\rho-\mu)$.
   	\end{proposition}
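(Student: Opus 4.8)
The plan is to deduce the statement from the properties of the tilting functor $t_S$ recalled just before it, after performing the weight bookkeeping dictated by the involution $\iota\colon\frakh^*\to\frakh^*$, $\iota(\lambda)=-2\rho-\lambda$. Four points have to be settled: (a) $\Lambda'=\iota(\Lambda)$ is a non-critical equivalence class of negative level; (b) the block decomposition of $\calM_S$ is compatible with $t_S$, so that $t_S$ restricts to an equivalence $\calM_{S,\Lambda}\to\calM_{S,\Lambda'}^{opp}$; (c) the expression $\mu'=-\gamma_S-w_S\mu$ (the one occurring in $t_S$) coincides with $\mu'=w_S\cdot(-2\rho-\mu)$, and $\mu\mapsto\mu'$ is an involutive bijection $\Lambda_S\leftrightarrow\Lambda'_S$; (d) $t_S$ sends $P_S(\mu)$ to the \emph{full} indecomposable tilting module $T_S(\mu')$, rather than to a truncated tilting $T_S^{\geqslant\lambda'}(\mu')$. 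For (a): by Theorem \ref{thm3.3} the relation $\sim$ is generated by the pairs $(\lambda,\mu)$ with $2(\lambda+\rho,\alpha)=n(\alpha,\alpha)$ and $\lambda-\mu=n\alpha$; since $\iota(\lambda)+\rho=-(\lambda+\rho)$, the pair $(\iota\lambda,\iota\mu)$ satisfies the same relation with $\alpha$ replaced by $-\alpha$, so $\iota$ descends to a bijection of $\frakh^*/\sim$ and $\Lambda'$ is a single class. The same identity gives $\Delta(\Lambda')=\Delta(\Lambda)$, hence $\Lambda'$ is non-critical, and $\langle\iota(\lambda)+\rho,\alpha^\vee\rangle=-\langle\lambda+\rho,\alpha^\vee\rangle$ shows $\iota$ interchanges dominant and anti-dominant weights, so $\Lambda'$ has negative level.

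For (c): from $\langle\rho-\rho_S,\alpha_i^\vee\rangle=0$ for $i\in S$ (so $\rho-\rho_S$ is fixed by $W_S$) and $w_S\rho_S=-\rho_S$ one gets $w_S\rho=\rho-2\rho_S$, hence $w_S\cdot(-2\rho-\mu)=-w_S\rho-w_S\mu-\rho=-2\rho+2\rho_S-w_S\mu=-\gamma_S-w_S\mu$. If $\mu\in\Lambda_S$, then for $i\in S$ we have $\langle\mu',\alpha_i^\vee\rangle=-\langle w_S\mu,\alpha_i^\vee\rangle\geq 0$ because $w_S\mu$ is $\Delta_S^+$-anti-dominant, so $\mu'\in P_S^+$; moreover $\alpha_i\in\Delta(\Lambda)$ for $i\in S$ (since $\langle\mu+\rho,\alpha_i^\vee\rangle\in\bbZ$), hence $w_S\in W_S\subseteq W(\Lambda)=W(\Lambda')$, and since $\iota(\mu)\in\Lambda'$ Proposition \ref{prop42}(2) gives $\mu'=w_S\cdot\iota(\mu)\in W(\Lambda')\cdot\iota(\mu)=\Lambda'$; thus $\mu'\in\Lambda'_S$. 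Because $\iota$ and the shifted $w_S$-action are involutions, a direct computation yields $(\mu')'=\mu$, so $\mu\mapsto\mu'$ is an involutive bijection $\Lambda_S\leftrightarrow\Lambda'_S$. (If $\Lambda_S=\emptyset$ there is nothing to prove, and if $S=I$ the category $\catO_S$ is semisimple and the statement is trivial, so assume henceforth $\Lambda_S\ne\emptyset$ and $S\ne I$.)

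For (b): the block decomposition restricts to $\calM_S=\bigoplus_\Theta\calM_{S,\Theta}$. Since $t_S$ is exact and sends $M_S(\nu)$ to $M_S(\nu')$ with $\nu\in\Theta_S\Rightarrow\nu'\in(\iota\Theta)_S$, and every object of $\calM_{S,\Theta}$ has a finite parabolic Verma flag with subquotients among $\{M_S(\nu):\nu\in\Theta_S\}$, we get $t_S(\calM_{S,\Theta})\subseteq\calM_{S,\iota\Theta}^{opp}$. A quasi-inverse of $t_S$ again sends parabolic Verma modules to parabolic Verma modules and, by the involutivity in (c), carries $\calM_{S,\iota\Theta}^{opp}$ back into $\calM_{S,\Theta}$; the two restrictions are mutually quasi-inverse. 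Hence $t_S\colon\calM_{S,\Lambda}\to\calM_{S,\Lambda'}^{opp}$ is an equivalence of exact categories, and it sends $M_S(\mu)$ to $M_S(\mu')$ with $\mu'=w_S\cdot(-2\rho-\mu)$ by (c).

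For (d): since $\Lambda$ has positive level it contains its dominant weight $\lambda_0$, which is the maximum of $(\Lambda,\leq)$; consequently $\catO_{S,\Lambda}=\catO_{S,\Lambda}^{\leqslant\lambda}$ for every $\lambda\geq\lambda_0$, and by Theorem \ref{thm3.4} the parabolic Verma multiplicities of $P_S^{\leqslant\lambda}(\mu)$ stabilize, so $P_S(\mu)\cong P_S^{\leqslant\lambda}(\mu)$ for every $\lambda\geq\lambda_0$. By the recalled behaviour of $t_S$, $t_S(P_S^{\leqslant\lambda}(\mu))\cong T_S^{\geqslant\lambda'}(\mu')$ with $\lambda'=-\gamma_S-w_S\lambda$ and $\mu'=-\gamma_S-w_S\mu=w_S\cdot(-2\rho-\mu)$. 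Fix $\beta\in\Delta^+\setminus\Delta_S^+$ and take $\lambda=\lambda_0+N\beta$, $N\in\bbZ_{\geq0}$; then $\lambda\geq\lambda_0$, while $w_S\beta\in\Delta^+\setminus\Delta_S^+$ (as $W_S$ preserves $\Delta^+\setminus\Delta_S^+$), so $\lambda'=\lambda_0'-Nw_S\beta$ with $\lambda_0'=w_S\cdot(-2\rho-\lambda_0)$ decreases without bound as $N\to\infty$. For $N$ large enough $\lambda'$ lies below every weight $\gamma$ for which $M_S(\gamma)$ occurs in the finite parabolic Verma flag of $T_S(\mu')$; then $T_S(\mu')$ satisfies both defining properties of $T_S^{\geqslant\lambda'}(\mu')$ — its $\Ext^1$ against every $M_S(\nu)$ vanishes (a fortiori for $\nu\nleq\lambda'$), and the cokernel of $M_S(\mu')\hookrightarrow T_S(\mu')$ has a finite parabolic Verma flag whose subquotients $M_S(\gamma)$ satisfy $\lambda'\leq\gamma<\mu'$ — so the uniqueness of $T_S^{\geqslant\lambda'}(\mu')$ forces $T_S^{\geqslant\lambda'}(\mu')\cong T_S(\mu')$. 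Since $t_S(P_S(\mu))=t_S(P_S^{\leqslant\lambda}(\mu))\cong T_S^{\geqslant\lambda'}(\mu')$ for all these $\lambda$, we conclude $t_S(P_S(\mu))\cong T_S(\mu')$. The step I expect to be the real obstacle is exactly this passage (d): one must upgrade the truncated tilting objects $T_S^{\geqslant\lambda'}(\mu')$ that $t_S$ a priori produces to the genuine indecomposable tilting modules of the negative-level block, and what makes it work is the combination of $P_S(\mu)$ being independent of the truncation weight $\lambda$ once $\lambda\geq\lambda_0$ together with the fact that $w_S$ carries the positive roots outside $\Delta_S$ to positive roots, so that $\lambda'=-\gamma_S-w_S\lambda$ can be driven arbitrarily far into the anti-dominant region.
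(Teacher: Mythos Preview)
Your approach coincides with the paper's: both reduce everything to the already-stated properties of $t_S$ plus the weight identity $-\gamma_S-w_S\mu=w_S\cdot(-2\rho-\mu)$. The paper's own proof is much terser than yours---it checks only (a) in one line and (c) via the identities $w_S\cdot\mu=w_S\mu-2\rho_S$ and $w(\mu+\rho)-\rho=w(\mu+\rho_S)-\rho_S$ for $w\in W_S$, then observes that $-\gamma_S-w_S\Lambda_S=\Lambda'\cap P_S^+$. Your points (b) and (d) are details the paper leaves entirely to the discussion preceding the proposition.

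There is, however, one genuine gap in your step (d). You assert that for $N$ large, $\lambda'=\lambda_0'-Nw_S\beta$ lies below every weight $\gamma$ in the parabolic Verma flag of $T_S(\mu')$, i.e.\ $\gamma-\lambda'\in Q^+$. But a single root $w_S\beta\in\Delta^+\setminus\Delta_S^+$ need not have full support in $I$: for instance if $\beta=\alpha_j$ with $j\notin S$, then $w_S\beta-\alpha_j\in\bbZ\Pi_S$, so the $\alpha_k$-coefficient of $\lambda'$ for $k\notin S\cup\{j\}$ never moves. Thus $\lambda'\leq\gamma$ can fail for all $N$. The repair is simple and in fact cleaner than your formulation: since $P_S(\mu)\cong P_S^{\leqslant\lambda}(\mu)$ for every $\lambda\geq\lambda_0$, the modules $T_S^{\geqslant\lambda'}(\mu')\cong t_S(P_S(\mu))$ are \emph{all the same object} as $\lambda$ varies. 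Property (1) of $T_S^{\geqslant\lambda'}(\mu')$ gives $\Ext^1_{\catO_S}(M_S(\nu),t_S(P_S(\mu)))=0$ whenever $\nu\nleq\lambda'$ for \emph{some} admissible $\lambda'$; but given any $\nu$, choosing $N$ large forces some coefficient of $\lambda'-\nu=\lambda_0'-\nu-Nw_S\beta$ negative, so $\nu\nleq\lambda'$. Hence the $\Ext^1$ vanishes for all $\nu$, $t_S(P_S(\mu))$ is an indecomposable tilting module with socle of its Verma flag $M_S(\mu')$, and therefore $t_S(P_S(\mu))\cong T_S(\mu')$.
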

   \begin{proof}
   	For a dominant weight $\lambda \in \Lambda$, it is easy to check that $-2\rho-\lambda$ is an anti-dominant weight, then by Proposition  \ref{prop42} we get $\Lambda' = W(\lambda)\cdot (-2\rho-\lambda)$ and  $\Lambda'$ is of negative level. Note that $w_S\cdot \mu = w_S\mu -2\rho_S$ and $w(\mu +\rho)-\rho = w(\mu +\rho_S)-\rho_S $ for all $w \in W_S$ and $\mu \in \frakh^*$. So we have 
   	\begin{equation*}
   		\begin{array}{ll}
   		 -\gamma_S -w_S\mu &= -2\rho -w_S\cdot \mu \\
   		 &= -\rho - w_S(\mu +\rho) \\
   		 &=-\rho + w_S(-2\rho - \mu +\rho)\\
   		 &= w_S\cdot (-2\rho -\mu). 
   		\end{array}  	
   	\end{equation*}
   Therefore, the dual set $-\gamma_S- w_S\Lambda_S$ is exactly the intersection of $\Lambda'$ and $P_S^+$.
    	\end{proof}
   Then we have 
   \begin{corollary}~\label{cor3.8}
   If $\Lambda$ is defined as the Proposition \ref{Prop43} above, then for any $\lambda, \mu \in \Lambda_S$, there is a vector space isomorphism \[ \Hom_{\catO_{S}}(M_S(\mu), M_S(\lambda)) \cong \Hom_{\catO_{S}}(M_S(\lambda'), M_S(\mu')). \]
   
   \end{corollary}
  
     \section{Truncations and reduction rules}\label{Sec5}
    In this section, we introduce a finite truncation of $\catO_{S, \Lambda}$. Then the main results in this paper will be given, i.e., a reduction rule and several applications. 
    
    Fix a subset $S \subset I$ of finite type.  The Ringel duality between truncated subcategories for the parabolic category of affine Kac-Moody algebras and the corresponding quotient induced by the tilting functor $t_S$ is considered in \cite{SVV}. We next consider the duality for a general parabolic category $\catO_{S}$. 
    
    For a finite dimensional associative algebra $A$ over $\bbC$, we abbreviate $A\text{-}\mathrm{mod}$ for the category of all finitely generated left $A$-modules, similarly, $\mathrm{Mod}\text{-}A$ and $\mathrm{mod}\text{-}A$ for the category of right $A$-modules and the category of  finitely generated right $A$-modules respectively.
    
    Let $Q$ be the root lattice of the pair $(\frakg, \frakh)$ and $Q^+$ its submonoid generated by root bases, i.e., $Q^+=\sum_i \bbZ_{\geq 0}\alpha_i$. We say a subset $\Theta \subset \frakh^*$ is \textit{lower-bounded} (resp. \textit{upper-bounded}) if there exist finitely many weights $\mu_j\in \frakh^*$ such that $\Theta$ is contained in the union of the sets $\mu_j + Q^+$ (resp. $\mu_j - Q^+$).
    
    Now fix a lower-bounded equivalence class $\Lambda \in P_S^+/\sim_S$ (it does exist since any equivalence class in $P_S^+/\sim_S$ is a subset of a certain equivalence class in $\frakh^*/\sim$). A \textit{finite truncation} of $\Lambda$ is a subset $\Lambda_b$ of $\Lambda$ satisfying that $ \Lambda_b$ is upper-bounded and $\Lambda_b = \cup_{\mu \in  \Lambda_b}\{\gamma \in \Lambda ; \gamma \leq \mu \}$. Consequently, $\Lambda_b$ is a finite poset with $\leq$.
    
    As in \cite{SVV} we consider a Serre subcategory of $\catO_{S, \Lambda}$ which is a highest weight category possessing finitely many simple objects.  Let $\catO_{S, \Lambda_b}$ be the subcategory of $\catO_{S, \Lambda}$ generated by all simple objects $L(\mu)$ with $\mu \in \Lambda_b$, i.e., for any short exact sequence 
    \[0 \rightarrow M_1 \rightarrow M_2 \rightarrow M_3 \rightarrow 0 \]
    in $\catO_{S, \Lambda}$, $M_2 \in \catO_{S, \Lambda_b}$ if and only if $M_1, M_3 \in \catO_{S, \Lambda_b}$. Denote $\calM_{S, \Lambda_b}\subset \catO_{S, \Lambda_b}$  the subcategory of all modules lying in $\calM_{S, \Lambda}$. Note that all tilting modules  $T_S(\mu)$  and $M_S(\mu)$ for $\mu \in \Lambda_b$ are contained in $\calM_{S, \Lambda_b}$.
    
    Let $\calC \subset \catO_{S, \Lambda_b}$ be the full subcategory of all finitely generated modules. Then $\calM_{S, \Lambda_b} \subset \calC$ and  $\calC$ is a highest weight category in the sense of  \cite[Definition 3.1]{CPS} or \cite[Section 2.4]{SVV}. Taking the minimal projective generator in $\calC$, consider the opposite of its endomorphism ring, denoted by $A$. Then $A$ is a finite dimensional quasi-hereditary algebra and  $\calC$ is equivalent to the category of finitely generated modules of $A$. We call the module $T := \oplus_{\mu \in \Lambda_b}T_S(\mu)$ the \textit{characteristic tilting module} in $\calC$. Set $\calR(A):= \End_\calC(T)$, which is called the \textit{Ringel dual} of $A$.   $\calR(A)$ is also a quasi-hereditary algebra. Define a contravariant functor $\calR_A:=\Hom_A(-, T): A \text{-}\mathrm{mod} \rightarrow  \mathrm{mod} \text{-}\calR(A)$. 
    
    We denote $ \Lambda' : = w_S \cdot (-2\rho - \Lambda)$ and $\Lambda_b' : = w_S \cdot (-2\rho - \Lambda_b)$.  Each simple module $L(\mu)$ for $\mu \in \Lambda_b'$ has a projective cover $P_S(\mu)$ in $\catO_{S, \Lambda'}$. Define $P:= \oplus_{\mu \in \Lambda_b'}P_S(\mu)$. Then by tilting equivalence in Subsection \ref{subsection42}, $ \End_\frakg(P) \cong  \calR(A)$ as algebras. Consider the functor
    \[F:= \Hom_{\catO_S}(P, -): \catO_{S, \Lambda'} \longrightarrow \Mod\text{-}\calR(A). \]
    Then we can easily deduce the following.
    \begin{proposition}
    	Let $F$ be the functor defined as above.
    	\begin{enumerate}
    		\item $F$ is exact and essentially surjective.
    		\item The kernel category $ \Ker(F) $ of $F$ is the (full) subcategory generated by all simple objects $L(\gamma)$ for $\gamma \in \Lambda' \setminus \Lambda_b'$.
    	\end{enumerate}
    \end{proposition}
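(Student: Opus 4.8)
The plan is to run the standard argument for a $\Hom$-functor out of a projective object, in three steps: exactness, essential surjectivity via free presentations, and identification of the kernel through composition factors.

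\emph{Exactness.} As recalled just before the statement, for each $\mu\in\Lambda_b'$ the simple $L(\mu)$ has a projective cover $P_S(\mu)$ in $\catO_{S,\Lambda'}$; hence the finite direct sum $P=\bigoplus_{\mu\in\Lambda_b'}P_S(\mu)$ is a projective object of $\catO_{S,\Lambda'}$, and $F=\Hom_\frakg(P,-)$ is exact. This step is immediate.

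\emph{Essential surjectivity.} Here $F(P)=\End_\frakg(P)\cong\calR(A)$ is the right regular $\calR(A)$-module. Since $P$ is finitely generated (a finite sum of projective covers of simple modules in the bounded-above block $\catO_{S,\Lambda'}$), it is compact, so $F$ commutes with arbitrary direct sums and $F(P^{(I)})\cong\calR(A)^{(I)}$ for every index set $I$; moreover the identification $\End_\frakg(P)\cong\calR(A)$ makes $F$ induce an isomorphism on $\Hom$-spaces between such free objects. Thus, given any right $\calR(A)$-module $N$, I would choose a free presentation $\calR(A)^{(J)}\xrightarrow{f}\calR(A)^{(I)}\to N\to 0$, lift $f$ to a morphism $\tilde f\colon P^{(J)}\to P^{(I)}$ in $\catO_{S,\Lambda'}$ with $F(\tilde f)=f$, and set $M:=\mathrm{coker}\,\tilde f$, which lies in $\catO_{S,\Lambda'}$ since that is a full abelian subcategory closed under direct sums and cokernels. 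Exactness of $F$ then gives $F(M)\cong\mathrm{coker}\,f\cong N$, so $F$ is essentially surjective. (If only essential surjectivity onto \emph{finitely generated} right modules is wanted, the compactness of $P$ is not even needed.)

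\emph{The kernel.} Fix $M\in\catO_{S,\Lambda'}$. Then $F(M)=0$ iff $\Hom_\frakg(P_S(\mu),M)=0$ for every $\mu\in\Lambda_b'$. Because $P_S(\mu)$ is the projective cover of $L(\mu)$, one has $\Hom_\frakg(P_S(\mu),M)\neq 0$ exactly when $L(\mu)$ occurs as a simple subquotient of $M$: a nonzero map out of $P_S(\mu)$ has nonzero image, which is a quotient of $P_S(\mu)$ and hence has $L(\mu)$ as a composition factor; conversely, if $N_1\subseteq N_2\subseteq M$ with $N_2/N_1\cong L(\mu)$, then lifting $P_S(\mu)\twoheadrightarrow L(\mu)$ along $N_2\twoheadrightarrow N_2/N_1$ via projectivity (with $N_2\in\catO_{S,\Lambda'}$) produces a nonzero map $P_S(\mu)\to M$. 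Hence $F(M)=0$ iff no $L(\mu)$ with $\mu\in\Lambda_b'$ is a subquotient of $M$; as every simple subquotient of a module in $\catO_{S,\Lambda'}$ has highest weight in $\Lambda'$, this is precisely the condition that all simple subquotients of $M$ lie in $\{L(\gamma):\gamma\in\Lambda'\setminus\Lambda_b'\}$. Since $F$ is exact, $\Ker(F)$ is a Serre subcategory, so it coincides with the full subcategory generated by those simple objects.

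I do not anticipate a serious obstacle here: the real content is imported from earlier, namely that the projective covers $P_S(\mu)$ exist in $\catO_{S,\Lambda'}$ and that $\End_\frakg(P)\cong\calR(A)$ via the tilting equivalence of Subsection \ref{subsection42}. The only points requiring care are bookkeeping ones dictated by the Kac--Moody setting, where objects of $\catO_{S,\Lambda'}$ may have infinite length and infinite-dimensional weight spaces: this is why I describe $\Ker(F)$ through simple subquotients rather than Jordan--Hölder multiplicities, and why the finite generation (equivalently, compactness) of $P$ must be checked so that $F$ commutes with infinite coproducts in the proof of essential surjectivity onto all of $\Mod\text{-}\calR(A)$.
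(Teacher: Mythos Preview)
Your argument is correct and is precisely the standard one the paper has in mind; the paper itself gives no proof, stating only that the proposition ``can be easily deduced.'' Your care in handling $\Ker(F)$ through simple subquotients rather than Jordan--H\"older multiplicities, and in checking that $P$ is finitely generated so that $F$ commutes with coproducts, is appropriate for the Kac--Moody setting and fills in exactly the details the paper leaves implicit.
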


    Denote $\catO_{S}(\Lambda_b')$ the quotient category of $\catO_{S, \Lambda'}$ by the kernel category $ \Ker(F) $. Therefore, $F$ induces an equivalence of categories 
    $ \catO_{S}(\Lambda_b') \cong \Mod\text{-}\calR(A).  $ Denote $\calD \subset \catO_{S}(\Lambda_b')$  the full subcategory of all finitely generated objects.
    
    Therefore, we build a Ringel duality between $\calC$ and $\calD$, namely, we have the following 
    \[\calC \stackrel{\sim}{\rightarrow} A \text{-}\mathrm{mod} \stackrel{\calR_A}{\longrightarrow} \mathrm{mod} \text{-}\calR(A) \stackrel{\sim}{\leftarrow} \calD,\]
    where $\calR_A = \Hom_A(-, T)$.
    
    The Ringel dual functor from $\calC$ to $\calD$ restricts to an equivalence between the exact category $\calM_{S, \Lambda_b}$ and the exact category of objects in $\calD$ with standard flag. Each simple object $L(\gamma)$ for $\gamma \in \Lambda_b'$ lies in $\calD$. If we denote the standard object associated to $L(\gamma)$ in $\calD$ by $A(\gamma)$ for $\gamma \in \Lambda_b'$, then we get the following property.
    \begin{theorem}\label{thmredu}
    	For $\lambda, \mu \in \Lambda_b$,  let $\lambda'= w_S\cdot(-2\rho-\lambda), \mu'=w_S\cdot (-2\rho- \mu) \in \Lambda'_b$. Then there is a vector space isomorphism  
    	\[  \Hom_{\catO_S}(M_S(\mu), M_S(\lambda)) \cong \Hom_{\calD}(A(\lambda'), A(\mu')).\]
    \end{theorem}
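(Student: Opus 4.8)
The plan is to trace the two Hom-spaces through the chain of equivalences $\calC \xrightarrow{\sim} A\text{-}\mathrm{mod} \xrightarrow{\calR_A} \mathrm{mod}\text{-}\calR(A) \xleftarrow{\sim} \calD$ and use functoriality together with the explicit behaviour of the relevant functors on standard and costandard objects. First I would observe that by Proposition \ref{Prop43} and the tilting equivalence of Subsection \ref{subsection42}, the functor $t_S$ restricted to $\calM_{S, \Lambda_b}$ sends $M_S(\mu)$ to $M_S(\mu')$ and $M_S(\lambda)$ to $M_S(\lambda')$; since $t_S: \calM_S \to \calM_S^{opp}$ is an equivalence of exact categories, it already yields $\Hom_\frakg(M_S(\mu), M_S(\lambda)) \cong \Hom_\frakg(M_S(\lambda'), M_S(\mu'))$ with the arrows reversed (this is the remark after Proposition \ref{Prop43}). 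So the whole content is to identify the right-hand side $\Hom_\frakg(M_S(\lambda'), M_S(\mu'))$, a Hom between parabolic Verma modules inside $\catO_{S, \Lambda'}$, with $\Hom_{\calD}(V(\lambda'), V(\mu'))$ between the corresponding standard objects in the quotient category $\calD = \catO_S(\Lambda_b')$.

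The key steps, in order, are as follows. (1) Under the equivalence $\catO_{S, \Lambda'} \supset \calC' \xrightarrow{\sim} A'\text{-}\mathrm{mod}$ (where $A'$ is the opposite endomorphism algebra of a minimal projective generator of the finitely generated part of $\catO_{S, \Lambda_b'}$; note $\calR(A) \cong A'^{\,\mathrm{op}}$-style relations hold via $\End_\frakg(P) \cong \calR(A)$), the parabolic Verma module $M_S(\nu)$ for $\nu \in \Lambda_b'$ goes to the standard module $\Delta^{A'}(\nu)$, because $M_S(\nu)$ is exactly the standard object of the highest weight category $\catO_{S, \Lambda_b'}$. (2) The Ringel dual functor $\calR_{A'} = \Hom_{A'}(-, T')$ sends standard modules $\Delta^{A'}(\nu)$ to standard modules over $\calR(A')$, and by the tilting dictionary the weight labels match up — the standard $\mathrm{mod}\text{-}\calR(A')$-module attached to $\nu$ pulls back, under $\calD \xrightarrow{\sim} \mathrm{mod}\text{-}\calR(A')$, precisely to $V(\nu)$. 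But $\calR_{A'}$ is an \emph{equivalence of exact categories} between modules with standard flag and modules with standard flag (cited from \cite{CM}), hence it induces isomorphisms on all Hom-spaces between such modules; in particular $\Hom_{A'}(\Delta^{A'}(\lambda'), \Delta^{A'}(\mu')) \cong \Hom_{\calR(A')}(\calR_{A'}\Delta^{A'}(\mu'), \calR_{A'}\Delta^{A'}(\lambda'))$ — note the contravariance of $\calR_{A'}$ reverses the arrows again. (3) Composing: the two order-reversals cancel, so $\Hom_\frakg(M_S(\mu), M_S(\lambda)) \cong \Hom_\frakg(M_S(\lambda'), M_S(\mu')) \cong \Hom_{A'}(\Delta^{A'}(\lambda'),\Delta^{A'}(\mu')) \cong \Hom_{\calD}(V(\lambda'), V(\mu'))$, which is what we want.

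There are two technical points to keep straight, and the second is the main obstacle. The bookkeeping obstacle is matching the three contravariant/covariant twists ($t_S$ contravariant, $\calR_{A'}$ contravariant, plus the passage between left and right modules) so that the final statement has arguments in the stated order $(V(\lambda'), V(\mu'))$ rather than swapped — this is routine once one fixes conventions, and one checks it on a single case (say $\lambda = \mu$, where both sides are $\bbC$ by the brick property of parabolic Verma modules). The genuine obstacle is step (2)'s identification of $V(\nu)$: one must verify that the standard object attached to $L(\gamma)$ in the quotient category $\calD = \catO_S(\Lambda_b')$ — defined abstractly via the highest weight structure on $\calD$ — really is the image of the parabolic Verma module $M_S(\gamma)$ under $F = \Hom_\frakg(P, -)$, equivalently under the Ringel dual functor. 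This requires knowing that $F$ is a quotient functor sending projectives to projectives and standards to standards, that it kills exactly $\Ker F$ (the simples outside $\Lambda_b'$), and that the highest weight structure on $\mathrm{mod}\text{-}\calR(A')$ transported to $\catO_S(\Lambda_b')$ has standard objects $F(M_S(\gamma))$; this is essentially the compatibility of Ringel duality with truncation, and I would either cite \cite{SVV, CM} for the affine case and adapt the argument, or prove it directly using that $P = \oplus_{\mu \in \Lambda_b'} P_S(\mu)$ is tilting-projective and that $\Hom_\frakg(P, M_S(\gamma))$ has the right dimension by the BGG reciprocity of Theorem \ref{thm3.4} together with the tilting equivalence.
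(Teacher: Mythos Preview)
Your proposal takes a detour that introduces a genuine confusion. The paper's argument is essentially the sentence preceding the theorem: the composite $\calC \xrightarrow{\sim} A\text{-}\mathrm{mod} \xrightarrow{\calR_A} \mathrm{mod}\text{-}\calR(A) \xleftarrow{\sim} \calD$ is a \emph{single} contravariant equivalence on objects with standard flag, and it carries the standard object $M_S(\lambda)$ of $\calC$ to the standard object $V(\lambda')$ of $\calD$ (the order-reversing bijection $\lambda\mapsto\lambda'$ being exactly the relabelling that Ringel duality forces). One contravariance, one swap of arguments, done.

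Your route applies $t_S$ first---which is fine, and yields $\Hom_\frakg(M_S(\mu),M_S(\lambda))\cong\Hom_\frakg(M_S(\lambda'),M_S(\mu'))$ as in the remark after Proposition~\ref{Prop43}---but then in step~(2) you introduce a \emph{second} Ringel duality $\calR_{A'}$ on the positive-level side. This is the gap. The category $\calD$ is not the Ringel dual of your $\calC'$; by construction $\calD\cong\mathrm{mod}\text{-}\calR(A)$ with $\calR(A)\cong\End_\frakg(P)$, so $\calD$ \emph{is} (the finitely generated part of) the quotient of the positive-level category, i.e.\ essentially your $\calC'$ itself once one sorts out left/right conventions. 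Applying $\calR_{A'}$ to it would land you back (up to double Ringel dual) in $\calC$, not in $\calD$. The tell is your claim that ``two order-reversals cancel'': the theorem as stated has exactly one swap (from $(\mu,\lambda)$ to $(\lambda',\mu')$), so any correct argument must involve an odd number of contravariances. After $t_S$ you already have the swap; what remains is a \emph{covariant} identification $\Hom_\frakg(M_S(\lambda'),M_S(\mu'))\cong\Hom_\calD(V(\lambda'),V(\mu'))$, not another Ringel duality.

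That covariant identification is the real content, and you do flag it at the end: one needs $F(M_S(\gamma))\cong V(\gamma)$ and that $F$ is fully faithful between standard-flag objects. But rather than proving this separately, note that it is exactly what the paper's single Ringel functor $\calR_A$ gives you for free, since by construction $\calR_A$ and $t_S$ followed by $F$ are (up to the stated algebra isomorphism $\End_\frakg(P)\cong\calR(A)$) the same functor on $\calM_{S,\Lambda_b}$. So the cleanest fix is to drop $t_S$ and $\calR_{A'}$ entirely and invoke $\calR_A$ directly, as the paper does.
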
	
    \begin{proof}
    	By \cite[Section 2.4]{CM}, we have the contravariant functor $\calR_A: A \text{-}\mathrm{mod} \rightarrow  \mathrm{mod} \text{-}\calR(A)$ restricts to an equivalence between the exact category of left $A$-modules with standard flag and the exact category of right $\calR(A)$-modules with standard flag. This gives an equivalence between $\calM_{S, \Lambda_b}$ and the exact category of objects in $\calD$ with standard flag. Hence the corresponding vector space isomorphism holds, and the theorem is proved as desired.
    \end{proof}

     For any $\frakg$-module $M \in \catO_S$ and $\mu \in \frakh^*$, a weight vector $v$ in $M_\mu$ is called a \textit{maximal (weight) vector} if the space $\frakn_I.v=0$.  
     Then   $\Hom_\frakg(M_S(\mu), M)$ as a vector space is obviously isomorphic to the space of  maximal  vectors $v$ of weight $\mu$ in $M$, denote this space by $M_\mu^{\frakn_I}$.
     
     \begin{proposition}
     	 Fix weights $\lambda, \mu \in P_S^+$, $\lambda \neq \mu$. If $M_S(\lambda)$ has a nonzero maximal vector of weight $\mu$, then $\lambda \geq \mu$ and  $\mathrm{ht}_S(\lambda - \mu) \geq 1$.
     \end{proposition}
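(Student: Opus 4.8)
The plan is to read off the weight structure of $M_S(\lambda)$ from the PBW decomposition $\frakg = \fraku_S^- \oplus \frakp_S$ and then to observe that, on the part of $M_S(\lambda)$ where $\lambda-\mu$ has $S$-height $0$, the module looks exactly like $L_S(\lambda)$; since $L_S(\lambda)$ is irreducible it has no $\frakn_S$-singular vectors apart from its highest weight line, and this will force $\mu=\lambda$.

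First I would use the PBW theorem to identify, as $\frakh$-modules (indeed as $\frakp_S$-modules),
\[ M_S(\lambda) = \calU(\frakg)\otimes_{\calU(\frakp_S)}L_S(\lambda)\ \cong\ \calU(\fraku_S^-)\otimes_\bbC L_S(\lambda), \]
with $1\otimes L_S(\lambda)$ mapping isomorphically onto a copy of $L_S(\lambda)$ inside $M_S(\lambda)$. Since $M_S(\lambda)$ is a highest weight module of highest weight $\lambda$, all its weights are $\le\lambda$, so $\lambda-\mu\in Q^+$ and $\mathrm{ht}_S(\lambda-\mu)\ge 0$, which is the first assertion. Now the weights of $\calU(\fraku_S^-)$ are the $-\gamma$ with $\gamma$ a $\bbZ_{\ge 0}$-combination of roots in $\Delta^+\setminus\Delta_S^+$; every such root involves some $\alpha_j$ with $j\notin S$, so it has $S$-height $\ge 1$, whence $\mathrm{ht}_S\gamma = 0$ forces $\gamma = 0$. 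On the other hand every weight of $L_S(\lambda)$ lies in $\lambda-\sum_{i\in S}\bbZ_{\ge 0}\alpha_i$, which has $S$-height $\mathrm{ht}_S\lambda$. Writing a weight $\mu$ of $M_S(\lambda)$ as $\mu = \nu-\gamma$ with $\nu\in P(L_S(\lambda))$ and $\gamma$ as above, we get $\mathrm{ht}_S(\lambda-\mu) = \mathrm{ht}_S\gamma$, and hence the key dichotomy: if $\mathrm{ht}_S(\lambda-\mu) = 0$ then $\gamma = 0$ and $M_S(\lambda)_\mu = 1\otimes L_S(\lambda)_\mu$.

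Then I would argue by contradiction. Assume $\mathrm{ht}_S(\lambda-\mu) = 0$ but $M_S(\lambda)$ has a nonzero maximal vector $v\in M_S(\lambda)_\mu$. By the previous step $v = 1\otimes v'$ for some nonzero $v'\in L_S(\lambda)_\mu$; for $i\in S$ we have $e_i\in\frakn_S\subset\frakp_S$, so $0 = e_i.v = 1\otimes(e_i.v')$, and since $L_S(\lambda)\hookrightarrow M_S(\lambda)$ is injective this yields $e_i.v' = 0$ for all $i\in S$. Thus $v'$ is an $\frakn_S$-singular weight vector in the irreducible $\frakl_S$-module $L_S(\lambda)$; the $\frakl_S$-submodule it generates is a highest weight module of highest weight $\mu$ and, by irreducibility, equals all of $L_S(\lambda)$, whose highest weight is $\lambda$ — so $\mu = \lambda$, contradicting $\lambda\neq\mu$. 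Hence $\mathrm{ht}_S(\lambda-\mu)\ge 1$. The one place that calls for a little care is the identification $M_S(\lambda)_\mu = 1\otimes L_S(\lambda)_\mu$ when $\mathrm{ht}_S(\lambda-\mu) = 0$, which rests entirely on the $S$-height bookkeeping above; everything else is routine, and nothing beyond the PBW theorem and Proposition \ref{prop32} is needed.
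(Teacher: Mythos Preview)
Your proof is correct and follows essentially the same line as the paper's: both assume $\mathrm{ht}_S(\lambda-\mu)=0$, deduce that the maximal vector lies in the copy $\calU(\frakl_S).v_\lambda\cong L_S(\lambda)$ inside $M_S(\lambda)$ (the paper asserts this directly, while you justify it via the PBW identification $M_S(\lambda)\cong\calU(\fraku_S^-)\otimes_\bbC L_S(\lambda)$ together with the $S$-height bookkeeping), and then use irreducibility of $L_S(\lambda)$ to force $\mu=\lambda$. One cosmetic point: ``$\mathrm{ht}_S\lambda$'' is not literally defined unless $\lambda$ lies in the root lattice---what you actually need (and use) is $\mathrm{ht}_S(\lambda-\nu)=0$ for every $\nu\in P(L_S(\lambda))$.
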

    \begin{proof}
    	$\lambda \geq \mu$ is clear. It suffices to show that $\mathrm{ht}_S(\lambda - \mu) \geq 1$. Otherwise, $\mathrm{ht}_S(\lambda - \mu) = 0$. Let $v$ be such a maximal vector of weight $\mu$ in $M_S(\lambda)$, then $v \in \calU(\frakl_S).v_{\lambda}$, where $v_\lambda$ is a highest weight vector in $M_S(\lambda)$ of weight $\lambda$. Note that $\calU(\frakl_S).v_{\lambda}$ is an $\frakl_S$-submodule in $M_S(\lambda)$ of highest weight $\lambda$, hence $\calU(\frakl_S).v_{\lambda} \cong L_S(\lambda)$ as $\frakl_S$-modules. However, $L_S(\lambda)$ has no non-trivial maximal vector of weight $\mu \neq \lambda$. It is a contradiction.
    \end{proof}	

   Next we show that there is a reduction rule of hom-spaces between parabolic Verma modules as in \cite[Lemma 3.11]{WX}, but the proof will be a little different due to the possibility of infinitely many roots involved.  Fix a subset $S$ in $I$, and weights $\lambda, \mu \in P_S^+$. If $\eta = \lambda - \mu= \sum_{i \in I}k_i \alpha_i \in Q$, then we can define a subset $J:= \mathrm{supp}(\eta)=\{i \in I ;  k_i \neq 0 \}$ of $I$ and $S' :=S \cap J$.
     Recall the subalgebras $\frakl_{J}$	and $\frakp_S$ of $\frakg$ defined in the section \ref{sec2},
     \[\frakl_{J} = \frakh \oplus (\bigoplus_{\alpha \in \Delta_J}\frakg_\alpha), \quad  \frakp_S = \frakl_S \oplus (\bigoplus_{\alpha \in \Delta^+ \setminus \Delta^+_S}\frakg_\alpha). \]
      Consider the subalgebra $\frakp_S \cap \frakl_{J}$, denoted by $\frakq_{S'}$. It is a parabolic subalgebra of $\frakl_{J}$ associated to $S'$. Note that 
     $\frakq_{S'}=\frakl_{S'} \oplus \fraku_{J \backslash S'}$,
     where the subalgebra $\fraku_{J \backslash S'}$ is defined as  
     \begin{align}
     \fraku_{J \backslash S'} = \bigoplus_{\alpha \in \Delta_J^+ \backslash \Delta_{S'}^+} \frakg_{\alpha}.
     \end{align}
      Obviously, $\frakq_{S'}$ is a subalgebra of $\frakp_S$. 
     
     Let $L_S(\lambda)$ (resp. $L_{S'}(\lambda)$) be the simple, locally $\frakn_I$-finite (resp. locally $\frakn_{J}$-finite), weight module with highest weight $\lambda$ of $\frakp_S$ (resp. $\frakq_{S'}$); essentially, they are  also simple weight modules of $\frakl_S$ and $\frakl_{S'}$ respectively, hence such notations $L_S(\lambda), L_{S'}(\lambda)$ make sense. There exists a natural injective $\frakq_{S'}$-module homomorphism from $L_{S'}(\lambda)$ to $L_S(\lambda)$ since $L_S(\lambda)$ is completely reducible when it is restricted as a  $\frakq_{S'}$-module. Similarly, we set
      \[ V_{S'}(\lambda) := \calU(\frakl_{J}) \otimes_{\calU(\frakq_{S'})}L_{S'}(\lambda), \] 
     a parabolic Verma module of $\frakl_{J}$ in terms of $S'$. If $S'$ is empty, $ V(\lambda)= V_{\emptyset}(\lambda) $ is just a Verma module of highest weight $\lambda$. 
     
     Since $\Delta_{J}^+ \setminus \Delta_{S'}^+ $ is contained in $ \Delta^+ \setminus \Delta_S^+$, we have the induced $\frakl_{J}$-module homomorphism $V_{S'}(\lambda) \rightarrow M_S(\lambda)$ is injective. Therefore we may regard $V_{S'}(\lambda)$ as an $\frakl_{J}$-submodule of $M_S(\lambda)$.
     
     \begin{theorem}\label{Lmm}
       For any subset $S$ of $I$, and any weights $\lambda, \mu$ in $P_S^+$,  the subsets $S' \subset J$ of $I$ are defined as above. Then we have a vector space isomorphism 
       	\[\mathrm{Hom}_{\catO_S(\frakg)}(M_S(\mu),M_S(\lambda)) \cong \mathrm{Hom}_{\catO_{S'}(\frakl_{J})}(V_{S'}(\mu),V_{S'}(\lambda)). \]
     \end{theorem}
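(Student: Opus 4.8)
The plan is to identify both Hom-spaces with spaces of maximal vectors, and then to show that the relevant $\mu$-weight spaces, together with their maximality conditions, literally coincide.

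First I would reduce to maximal vectors. As noted at the start of this subsection, for any $M\in\catO_S$ one has $\Hom_\frakg(M_S(\mu),M)\cong M_\mu^{\frakn_I}$, and the same observation applied to the symmetrizable Kac-Moody algebra $\frakl_J\cong\frakg(A_J)$ together with its parabolic subalgebra $\frakq_{S'}$ gives $\Hom_{\frakl_J}(V_{S'}(\mu),N)\cong N_\mu^{\frakn_J}$ for $N\in\catO_{S'}(\frakl_J)$; this is legitimate because $S'\subseteq S$ forces $\mu\in P_S^+\subseteq P_{S'}^+(\frakl_J)$, and $V_{S'}(\lambda)\in\catO_{S'}(\frakl_J)$ by construction. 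Thus it suffices to produce a linear isomorphism $M_S(\lambda)_\mu^{\frakn_I}\cong V_{S'}(\lambda)_\mu^{\frakn_J}$, and in fact I will show, via the inclusion $V_{S'}(\lambda)\hookrightarrow M_S(\lambda)$ constructed just above, that these are the same subspace of the common weight space $M_S(\lambda)_\mu=V_{S'}(\lambda)_\mu$.

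The main step is the identity $M_S(\lambda)_\mu=V_{S'}(\lambda)_\mu$; since that inclusion is weight-preserving and injective, only surjectivity at weight $\mu$ needs proof. Here I would use the PBW decomposition $M_S(\lambda)\cong\calU(\fraku_S^-)\otimes_\bbC L_S(\lambda)$, so that $M_S(\lambda)_\mu$ is spanned by elements of $\calU(\fraku_S^-)_\gamma\otimes L_S(\lambda)_\delta$ with $\gamma+\delta=\mu$; nonvanishing of such a summand forces $-\gamma\in\bbZ_{\geq0}\Delta^+$ and $\lambda-\delta\in\bbZ_{\geq0}\Delta_S^+$, and since these two nonnegative integral combinations of simple roots sum to $\eta=\lambda-\mu$, whose support is $J$, each of them is supported on $J$. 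Consequently $\lambda-\delta$ is supported on $S\cap J=S'$, whence (a weight vector of weight $\lambda-\beta$ in a highest weight module of highest weight $\lambda$ being a sum of monomials $f_{\theta_1}\cdots f_{\theta_r}v_\lambda$ with $\sum\theta_i=\beta$, each $\theta_i$ then supported inside $\mathrm{supp}(\beta)$) one gets $L_S(\lambda)_\delta\subseteq\calU(\frakn_{S'}^-)v_\lambda$, i.e. $L_S(\lambda)_\delta$ lies in the copy of $L_{S'}(\lambda)$ inside $L_S(\lambda)$; similarly every monomial contributing to $\calU(\fraku_S^-)_\gamma$ is a product of root vectors $f_{\beta_j}$ with $\beta_j\in\Delta^+\setminus\Delta_S^+$ supported on $J$, hence $\beta_j\in\Delta_J^+\setminus\Delta_{S'}^+$ (using $\Delta_J\cap\Delta_S=\Delta_{S'}$), so $\calU(\fraku_S^-)_\gamma=\calU(\fraku_{J\setminus S'}^-)_\gamma$. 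Therefore every such summand lies inside $\calU(\fraku_{J\setminus S'}^-)\otimes_\bbC L_{S'}(\lambda)\cong V_{S'}(\lambda)$, which gives the surjectivity. I expect this bookkeeping — in particular keeping the two support claims straight in the presence of possibly infinitely many (and imaginary) roots — to be the only real obstacle; the other two steps are formal.

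Finally I would match the maximality conditions on $w\in M_S(\lambda)_\mu=V_{S'}(\lambda)_\mu$. Since $\frakn_J\subseteq\frakn_I$ and $V_{S'}(\lambda)$ is an $\frakl_J$-submodule of $M_S(\lambda)$, $\frakn_I w=0$ immediately implies $\frakn_J w=0$. For the converse, if $\alpha\in\Delta^+\setminus\Delta_J^+$ then $\alpha$ has a strictly positive coefficient on some simple root outside $J$, so $\eta-\alpha\notin\bbZ_{\geq0}\Delta^+$, i.e. $\mu+\alpha\not\leq\lambda$; hence $M_S(\lambda)_{\mu+\alpha}=0$ and $\frakg_\alpha w=0$, and together with $\frakn_J w=0$ this yields $\frakn_I w=0$. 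Thus $M_S(\lambda)_\mu^{\frakn_I}=V_{S'}(\lambda)_\mu^{\frakn_J}$ as subspaces of $M_S(\lambda)$, and combining this with the first paragraph gives the asserted isomorphism.
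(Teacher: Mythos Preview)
Your proposal is correct and follows essentially the same three-step approach as the paper: reduce to spaces of maximal vectors, show the equality $M_S(\lambda)_\mu=V_{S'}(\lambda)_\mu$ of weight spaces via PBW and support considerations, and then match the maximality conditions. The only notable difference is in the last step: the paper shows that for $i\in I\setminus J$ the Chevalley generator $e_i$ commutes with $\frakn_{S'}^-$ and $\fraku_{J\setminus S'}^-$ and hence kills all of $V_{S'}(\lambda)$, whereas you argue directly that $M_S(\lambda)_{\mu+\alpha}=0$ for every $\alpha\in\Delta^+\setminus\Delta_J^+$; both arguments are valid and equally short.
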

     \begin{proof}
     	 We only need to show that 
     \begin{align}\label{tocheck}
     	M_S(\lambda)^{\frakn_{I}}_\mu \cong V_{S'}(\lambda)^{\frakn_{J}}_\mu. 
     \end{align}
     Let $v_\lambda$ be a (nonzero) maximal vector of weight $\lambda$ in $V_{S'}(\lambda)$, so $v_\lambda$ is also a maximal vector in $M_S(\lambda)$.
     Let 
     \[\underline{\beta}= (\beta_1, \beta_2, \cdots) \quad (\text{resp.} \underline{\gamma}= ( \gamma_1, \gamma_2,\cdots) ) \]
     be a (possibly infinite) sequence of all positive roots lying in $\Delta_S^+$ (resp. $\Delta^+ \setminus \Delta_S^+$) with no repeated terms. Denote $\underline{k}= (k_1, k_2, \cdots)$ a sequence of nonnegative integers in which only finite many terms are nonzero. Then we can define an operation $\underline{k}\cdot \underline{\beta}= \sum_{i}k_i\beta_i$. Note that $M_S(\lambda) = \calU(\frakg).v_\lambda$ and the PBW basis theorem implies that 
     \[\calU(\frakg) = \calU(\fraku_S^-) \otimes_\bbC \calU(\frakp_S), \quad \calU(\frakp_S) = \calU(\frakl_S)\otimes_\bbC \calU(\fraku_S),\]
     and 
     \[ \calU(\frakl_S) = \calU(\frakn_S^-)\otimes_\bbC \calU(\frakh)\otimes_\bbC \calU(\frakn_S) \]
     as vector spaces. Therefore, for any nozero vector $v \in M_S(\lambda)_\mu$, we have $v = u.v_\lambda$ for some $u \in \calU(\fraku_S^-) \otimes_\bbC \calU(\frakn_S^-)$, since $\fraku_S$ and $\frakn_S$ acts on $v_\lambda$ trivially, and $\calU(\frakh)$ acts on $v_\lambda$ as scalars. Moreover, the element $u$ can be chosen in the $(\mu - \lambda )$-weight space of  $ \calU(\fraku_S^-) \otimes_\bbC \calU(\frakn_S^-) $. Then we have 
     \begin{equation}\label{equMu43}
     	M_S(\lambda)_\mu= \sum_{\underline{k}, \underline{l}}(\calU(\fraku_S^-)_{-\underline{l}\cdot \underline{\gamma}} \otimes_\bbC \calU(\frakn_S^-)_{-\underline{k}\cdot \underline{\beta}}).v_\lambda, 
     \end{equation}
     where $\underline{k}, \underline{l}$ are taken over all such sequences such that $ \underline{k}\cdot \underline{\beta}+\underline{l}\cdot \underline{\gamma} = \lambda-\mu$. 
     
     Set $\eta = \lambda - \mu$. If $M_S(\lambda)_\mu$ is not zero, then $\eta \in \bbZ\Delta_J$, where $J = \mathrm{supp}(\eta)$. In the equation (\ref{equMu43}), we can restrict our attention to consider only the terms of $\underline{\beta}$ and $\underline{\gamma}$  in $\Delta_{S'}^+$ and $\Delta^+_{J}\setminus \Delta_{S'}^+ $ respectively. Hence
      \[ \calU(\frakn_S^-)_{-\underline{k}\cdot \underline{\beta}}= \calU(\frakn_{S'}^-)_{-\underline{k}\cdot \underline{\beta}} \quad \text{ and } \quad \calU(\fraku_S^-)_{-\underline{l}\cdot \underline{\gamma}}=\calU(\fraku_{J \setminus S'}^-)_{-\underline{l}\cdot \underline{\gamma}}\]
      in the equation (\ref{equMu43}).
      
     Similarly, we have  
     \[V_{S'}(\lambda)_\mu= \sum_{\underline{k}, \underline{l}}(\calU(\fraku_{J\setminus S'}^-)_{-\underline{l}\cdot \underline{\gamma}} \otimes_\bbC \calU(\frakn_{S'}^-)_{-\underline{k}\cdot \underline{\beta}}).v_\lambda, \]
     for the sequences $\underline{\beta},  \underline{\gamma}$ with all terms lying in $\Delta_{S'}^+$ and $\Delta^+_{J}\setminus \Delta_{S'}^+$, and all sequences  $\underline{k}, \underline{l}$ defined above satisfying $ \underline{k}\cdot \underline{\beta}+\underline{l}\cdot \underline{\gamma} = \eta $. Hence $M_S(\lambda)_\mu=V_{S'}(\lambda)_\mu$.
     
     Obviously, $ M_S(\lambda)^{\frakn_{I}}_\mu $ is contained in $ V_{S'}(\lambda)^{\frakn_{J}}_\mu$ since $\frakn_{J} \subset \frakn_I$. On the other hand, note that for each $i \in I \setminus J$, the nonzero root vector $e_i$ in the space $\frakg_{\alpha_i}$ commutes with $\frakn_{S'}^-$ and $\fraku_{J \setminus S'}^-$, thus $e_i$ kills all elements in $V_{S'}(\lambda)$. In particular, we get the isomorphism (\ref{tocheck}) which implies this theorem. 
     \end{proof}
 In particular, if $J \cap S =\emptyset$, then we have a vector space isomorphism 
\begin{equation}\label{key}
	 \mathrm{Hom}_{\catO_S(\frakg)}(M_S(\mu),M_S(\lambda)) \cong \mathrm{Hom}_{\catO(\frakl_{J})}(V(\mu),V(\lambda)).
\end{equation}
Therefore, if $\frakl_{J}$ is further a finite dimensional reductive Lie algebra, then 
\[  \Dim_\bbC \mathrm{Hom}_{\catO_S(\frakg)}(M_S(\mu),M_S(\lambda)) \leq 1. \] 
If $\frakl_{J}$ is not finite dimensional, we next point out that there exists the unique embedding  property for certain cases.

 For $\Lambda \in \frakh^*/\sim$ of positive level, $\lambda, \mu \in \Lambda$, there exists the unique embedding theorem for Verma modules (cf. \cite[Theorem 2.5.3]{KT1}). Applying the tilting functor, we have the following. 
  \begin{proposition} \label{Prop65}
 	Let $\Lambda \in \frakh^*/\sim$ be of positive or negative level. For any $\lambda, \mu \in \Lambda$, we have
 	\[\mathrm{dim}_\bbC  \mathrm{Hom}_\catO(M(\mu), M(\lambda)) \leq 1. \]

 \end{proposition}
 \begin{proof}
 	If $\Lambda$ is of negative level, then $-2\rho-\Lambda$ is of positive level. By Corollary ~\ref{cor3.8}, for any weights $\lambda, \mu \in \Lambda$, the tilting functor $t$ gives a vector space isomorphism between $ \mathrm{Hom}_\catO(M(\mu), M(\lambda)) $ and $ \mathrm{Hom}_\catO(M(-2\rho-\lambda), M(-2\rho-\mu)) $, then by  \cite[Theorem 2.5.3]{KT1} we prove it as desired.
 \end{proof}	
 
 \begin{remark}
 In general, the unique embedding property fails for infinite dimensional Kac-Moody algebra. For example,  the hom-space of Verma modules at critical level may have dimension greater than 1 (see, e.g., \cite{Wallach,AF,KK}). 
 \end{remark}

     Fix a subset $J \subset I$. Note that $\frakl_J$ is a Kac-Moody algebra essentially as showed in Section \ref{sec2}. Therefore, we say $\lambda \in \frakh^*$ is \textit{$J$-dominant (resp. $J$-antidominant)} if $\langle \lambda + \rho, \alpha_i^\vee\rangle \notin \bbZ_{< 0} $ $( \text{ resp.} \notin \bbZ_{> 0}), \forall i \in J$. Similarly, we can also consider the \emph{$J$-positive level} and \emph{$J$-negative level} as the definitions in Subsection \ref{subsection4.1}. Moreover, we say a weight $\lambda \in \frakh^*$ is of ($J$-)positive (resp. negative) level if $\lambda$ lies in an equivalence class of ($J$-)positive (resp. negative) level. Then we have
     
     \begin{theorem} \label{cor7.3}
     	For $\lambda, \mu \in P_S^+$, let $J:=\mathrm{supp}(\lambda - \mu) \subset I$. If the height and the $S$-height of $\lambda-\mu$ are equal, i.e.,  $\mathrm{ht}_S(\lambda -\mu)= \mathrm{ht}(\lambda - \mu)$ and $\lambda$ is of $J$-positive level (or of $J$-negative level), then 
     	\begin{align}\label{ineq}
     	\mathrm{dim}_\bbC \mathrm{Hom}_{\catO_S}(M_S(\mu), M_S(\lambda)) \leq 1. 
     	\end{align}
     \end{theorem}	
     \begin{proof}
     	The condition $\mathrm{ht}_S(\lambda -\mu)= \mathrm{ht}(\lambda - \mu)$ implies that $J \cap S=\emptyset$. Then by the isomorphism (\ref{key}) and  Proposition \ref{Prop65} we get this inequality (\ref{ineq}).
     \end{proof}
 
     \begin{corollary}
 	    For an integral dominant weight $\lambda \in \frakh^*$, and any element $w$ in the parabolic subgroup $W_{I\setminus S}$, we have 
 	    \[\mathrm{dim}_\bbC \mathrm{Hom}_{\catO_S}(M_S(w\cdot\lambda), M_S(\lambda)) \leq 1 \]
     \end{corollary}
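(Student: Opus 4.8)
The plan is to deduce this from Corollary \ref{cor7.3}, applied to the pair $\lambda$ and $\mu:=w\cdot\lambda$. The first task is to understand the weight $\nu:=\lambda-w\cdot\lambda=(\lambda+\rho)-w(\lambda+\rho)$. Since $\lambda$ is integral dominant, $\lambda+\rho$ is a dominant integral weight, so $\nu\in\sum_{i\in I}\bbZ_{\geq0}\alpha_i$ by the standard positivity of $\nu'-w\nu'$ for dominant $\nu'$. On the other hand, every simple reflection $r_i$ with $i\in I\setminus S$ changes an integral weight by an integral multiple of $\alpha_i$, and $W_{I\setminus S}$ preserves the integral weight lattice, so $\nu\in\sum_{i\in I\setminus S}\bbZ\alpha_i$. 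Intersecting the two, $\nu=\sum_{i\in I\setminus S}c_i\alpha_i$ with all $c_i\in\bbZ_{\geq0}$; in particular $J:=\mathrm{supp}(\nu)\subseteq I\setminus S$, which gives $\mathrm{ht}_S(\lambda-\mu)=\mathrm{ht}(\lambda-\mu)$, the first hypothesis of Corollary \ref{cor7.3}.

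Next I would check the remaining hypotheses. First, $\mu=w\cdot\lambda$ lies in $P_S^+$: for $i\in S$ one has $\langle\mu,\alpha_i^\vee\rangle=\langle\lambda,\alpha_i^\vee\rangle-\sum_{j\in I\setminus S}c_j a_{ij}\geq\langle\lambda,\alpha_i^\vee\rangle\geq0$ because $a_{ij}\leq0$ for $i\neq j$, and this number is an integer; also $\lambda\in P_S^+$ trivially, so the parabolic Verma modules $M_S(\lambda)$ and $M_S(\mu)$ are defined and $\mu\leq\lambda$. Second, since $\lambda$ is integral dominant, $\langle\lambda+\rho,\alpha_i^\vee\rangle\geq1$ for every $i$, in particular for $i\in J$, so $\lambda$ is $J$-dominant, and the block of $\lambda$ over $\frakl_{J}$ is non-critical (for $\frakl_J$ of affine type this is the inequality $\mathrm{level}(\lambda)\geq0>-h^\vee$; in general one verifies that no imaginary root of $\frakl_J$ enters $\Delta(\Lambda)$, since $2(\lambda+\rho,\beta)>0$ on positive imaginary roots). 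Hence $\lambda$ is of $J$-positive level. Now Corollary \ref{cor7.3} applies verbatim and yields $\mathrm{dim}_\bbC\Hom_{\catO_S}(M_S(w\cdot\lambda),M_S(\lambda))\leq1$.

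The only genuinely delicate point is the non-criticality of the $\frakl_J$-block of $\lambda$, needed merely so that ``$J$-positive level'' is a meaningful notion; the rest is bookkeeping inside the root lattice together with the already-established reduction \eqref{key} and Proposition \ref{Prop65} that underlie Corollary \ref{cor7.3}. One may also phrase the whole proof directly: by \eqref{key} the Hom-space is isomorphic to $\Hom_{\catO(\frakl_{J})}(V(w\cdot\lambda),V(\lambda))$, a Hom-space of ordinary Verma modules over $\frakl_{J}$ at the dominant integral, hence positive-level, weight $\lambda$, and then the uniqueness-embedding bound recalled in Proposition \ref{Prop65} immediately gives dimension $\leq1$.
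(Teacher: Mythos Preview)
Your proof is correct and follows essentially the same route as the paper: apply Corollary~\ref{cor7.3} after checking that $\mathrm{ht}_S(\lambda-w\cdot\lambda)=\mathrm{ht}(\lambda-w\cdot\lambda)$ for $w\in W_{I\setminus S}$. The paper's proof is a one-line invocation of that corollary, whereas you have (correctly) spelled out the verifications that $\mu=w\cdot\lambda\in P_S^+$, that $J\subseteq I\setminus S$, and that $\lambda$ is of $J$-positive level, including the non-criticality issue the paper leaves implicit.
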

  \begin{proof}
  	 It follows from Theorem \ref{cor7.3} and $\mathrm{ht}_S(\lambda -w\cdot\lambda)= \mathrm{ht}(\lambda - w\cdot\lambda)$ for all elements $w \in W_{I\setminus S}$.
  \end{proof}	

\section*{Acknowledgments}

This project is partially supported by the NSF of China (Grants 11771410 and 11931009). It was done during the author's visit to the Kansas State University. The author would like to thank Prof. Z. Lin for his warm hospitality and careful guidance. The author also thanks his advisors Prof. Y. Gao and Prof. H. Chen for a lot of helpful discussions during the preparation of the paper. The author is also grateful to the referees for their valuable suggestions to improve the paper.
      
	\bibliographystyle{amsplain}

\end{document}